\DeclarePairedDelimiter{\set}{\lbrace}{\rbrace}
\newcommand{\N}{\mathbb{N}}
\theoremstyle{definition}
\newtheorem{theorem}{Theorem}[section]
\newtheorem{lemma}[theorem]{Lemma}
\renewcommand{\P}{\mathbb{P}}
\newcommand{\x}{\mathbf{x}}
\newcommand{\y}{\mathbf{y}}
\newcommand{\G}{\mathscr{G}}
\newcommand{\C}{\mathscr{C}}
\newcommand{\X}{\mathcal{X}}
\newcommand{\E}{\mathbb{E}}
\newcommand{\0}{\mathbf{0}}
\newcommand{\R}{\mathbb{R}}
\renewcommand{\d}{\mathrm{d}}
\newcommand{\z}{\mathbf{z}}
\newcommand{\w}{\mathbf{w}}
\renewcommand{\v}{\mathbf{v}}
\renewcommand{\u}{\mathbf{u}}
\newcommand{\gp}{g^{\text{pa}}}
\newcommand{\heap}[2]  {\genfrac{}{}{0pt}{}{#1}{#2}}
\author{
Peter Gracar\thanks{Department of Mathematics, University of Cologne, Weyertal 86-90, 50931 K\"oln, Germany.}\\pgracar@math.uni-koeln.de\\
\and
Lukas L\"{u}chtrath\footnotemark[1]\\l.luechtrath@uni-koeln.de\\
\and
Peter M\"orters\footnotemark[1]\\moerters@math.uni-koeln.de\\
}
\title{Percolation phase transition in weight-dependent random connection models}
\ifundef{\abstract}{}{\patchcmd{\abstract}%
{\quotation}{\quotation\noindent\ignorespaces}{}{}}
\begin{document}
\maketitle 
\begin{spacing}{0.9}
\begin{abstract} We investigate spatial random graphs defined on the points of a Poisson process in $d$-dimensional space, which combine scale-free degree distributions and long-range effects. Every Poisson point is assigned an independent weight. Given the weight and position of the points, we form an edge between any pair of points independently with a probability depending on the two weights of the points and their distance. Preference is given to short edges and connections to vertices with large weights. 
We characterize the parameter regime where there is a non-trivial percolation phase transition and show that it depends not only on the power-law exponent of the degree distribution but also on a geometric model parameter. We apply this result to characterize robustness of  age-based spatial preferential attachment networks. \\
\\
\footnotesize{{\textbf{AMS-MSC 2010}: 60K35, 05C80}
\\{\bf Key Words}: Percolation, phase transition, subcritical regime, random geometric graph, random connection model, Boolean model,  scale-free percolation, long-range percolation,
spatial network, robustness, age-based spatial preferential attachment.}
\end{abstract}
\end{spacing}
\begin{spacing}{1}
\newpage

\section{Introduction and statement of results}\label{SecIntro}

\subsection*{Motivation}
In classical continuum percolation theory a graph is built with a Poisson point process in $\R^d$ as the vertex set. Two points are connected by an edge if their Euclidean distance is below a fixed or variable threshold.  Assuming the resulting graph has an infinite component, one asks whether there exists an infinite component in the percolated graph where every edge is independently removed with probability $1-p$, respectively retained with probability $p$. We say that the graph has a \emph{percolation phase transition} if there is a critical probability $p_c>0$ such that, almost surely,  if $p<p_c$  there is no infinite component, and if $p>p_c$ there exists an infinite component in the percolated graph. It is known that there exists a percolation phase transition for the fixed threshold model in $\R^d$, often called the Boolean model, and for variable threshold models where the threshold is the sum of  independent radii with finite $d$th moment associated with the points~\cite{gouere2008, gouere2009}. The result also extends to long-range percolation models, where the probability that two points are connected is a decreasing function of their distance, see \cite{penrose91, Meester96}. 
\medskip
  
By contrast, the continuous version of the scale-free percolation model of van der Hofstad, Hooghiemstra and Deijfen
\cite{DeijfenHofstadHooghiemstra2013} does not have a percolation phase transition if the power-law exponent  satisfies $\tau<3$, see for example \cite{heydenreich2017, Deprez2018}. In fact, for many graphs combining scale-free degree distributions and long-range effects the problem of 
existence of a percolation phase transition
is open. This includes, for example, models where the connection probability of two points is a decreasing function of the ratio of their distance and the sum or maximum of their radii. In this paper we look at a broad class of such graphs, the \emph{weight-dependent random connection models}, and characterize  the parameter regimes where there is a percolation phase transition. Other than in the scale-free percolation model, in this class a subcritical phase can only fail to exist if there is sufficiently small power-law exponent combined with a strong long-range effect. The weight-dependent random connection models include the weak local limits of the age-based preferential attachment model introduced 
in~\cite{GracarEtAl2019}. We use this result to characterize the regimes when these network models 
are robust under random removal of edges
offering new insight into the notoriously difficult topic of spatial preferential attachment networks, see~\cite{JacobMoerters2017}.
\medskip

\pagebreak[3]

\subsection*{Framework}

We introduce the weight-dependent random connection model as in~\cite{gracar2019recurrence}. The vertex set of the graph $\G$ is a Poisson point process of unit intensity on $\mathbb{R}^d\times (0,1]$. We think of a Poisson point $\mathbf{x}=(x, t)$ as a \emph{vertex} at \emph{position} $x$ with \emph{weight} $t^{-1}$. Two vertices $\x$ and $\y$ are connected by an edge in $\G$ independently of any other (possible) edge with probability $\varphi(\x,\y)$. Here, $\varphi$ is a connectivity function
	\[\varphi: (\mathbb{R}^d\times (0,1])\times(\mathbb{R}^d\times(0,1])\to [0,1],\]
of the form 
	\[\varphi(\x,\y)=\varphi((x,t),(y,s))=\rho(g(t,s)|x-y|^d)\]
for a non-increasing, integrable \emph{profile function} $\rho:\mathbb{R}_+\to[0,1]$ and a function $g\colon(0,1)\times(0,1)\to\mathbb{R}_+$, which is symmetric and non-decreasing in both arguments. Hence, we give preference to short edges or edges that are connected to vertices with large weights. We also assume (without loss of generality) that 
	\begin{equation}
		\int_{\mathbb{R}^d}\ \rho(|x|^d) \,  \d x =1. \label{IntegrabilityCond}
	\end{equation}
Then, the degree distribution of a vertex only depends on the function $g$. However, the profile function controls the intensity of long edges in the graph. 
\smallskip

We next give explicit examples for the  function $g$ we will focus on throughout the paper. We define the functions in terms of two parameters $\gamma\in(0,1)$ and $\beta\in(0,\infty)$. The parameter $\gamma$ describes the strength of the influence of the vertices' weights on the connection probability; the larger $\gamma$, the stronger the preference of connecting to vertices with large weight.  All kernel functions we consider lead to models that are \emph{scale-free} with power law exponent 
	\[\tau = 1+\frac{1}{\gamma},\]
see \cite{gracar2019recurrence, GracarEtAl2019}. {In particular,} all graphs are locally finite, i.e.\ every vertex has finite degree. The parameter $\beta$ is used to control the edge density, i.e.\ increasing $\beta$ increases the expected number of edges connected to a typical vertex \cite{GracarEtAl2019}. Our focus is on the following three functions, for further examples, see \cite{gracar2019recurrence}. 

\begin{itemize}
	\item The \emph{sum kernel}, defined as
		\[g^\text{sum}(s,t)=\beta^{-1} (s^{-\gamma/d}+t^{-\gamma/d})^{-d}.\]
	The interpretation of $(\beta a s^{-\gamma})^{1/d}, (\beta a t^{-\gamma})^{1/d}$ as random radii together with $\rho(r)=\mathbbm{1}_{[0,a]}(r)$ leads to the Boolean model in which two vertices are connected by an edge when their associated balls intersect.\pagebreak[3]
	\item The \emph{min kernel}, defined as
		\[g^\text{min}(s,t)=\beta^{-1}(s\wedge t)^\gamma.\]
		Here, in the case of an indicator profile function as above, two vertices are connected by an edge when one of them lies inside the ball associated with the other one.
	As $2^{-d} g^{\text{min}}\leq g^\text{sum}\leq g^\text{min}$ 
	the min kernel and the sum kernel show qualitatively similar behaviour.
	\item The \emph{preferential attachment kernel}, 
	defined as 
		\begin{equation}
			g^\text{pa}(s,t) = \beta^{-1}(s\vee t)^{1-\gamma}(s\wedge t)^\gamma. \label{PAKernel}
		\end{equation}
	It gives rise to the \emph{age-dependent random connection model} introduced by Gracar et al. \cite{GracarEtAl2019}. This model is the weak local limit of the age-based spatial preferential attachment model which is an approximation of the spatial preferential attachment model introduced by Jacob and M\"orters \cite{JacobMoerters2015}. 
\end{itemize}

As we want to study the influence of long-range effects on the percolation problem, we focus primarily on profile functions that are 
\emph{regularly varying} with index $-\delta$ for some $\delta>1$, that is
	\begin{equation}
	\lim_{r\uparrow\infty} \frac{\rho(cr)}{\rho(r)} = c^{-\delta} \quad\mbox{ for all } c\geq 1. \label{RegularVarying}
	\end{equation}	
A comparison argument can be  used to derive the behaviour of profile functions with lighter tails (including those
with bounded support) from a limit $\delta\uparrow \infty$. 
\smallskip

We fix one of the kernels above, as well as $\gamma$, $\beta$ and $\delta$.  Let $p\in[0,1]$ and perform Bernoulli bond percolation
with retention parameter $p$ on the graph $\G$, i.e., every edge of~$\G$ remains intact independently with probability $p$, or is removed with probability~$1-p$. We denote the graph we obtain by $\G^p$ 
and ask whether there exists an infinite cluster, or equivalently an infinite self-avoiding path, in $\G^p$. If so, 
we say that the graph \emph{percolates}. 
We define the \emph{critical percolation parameter} $p_c$ as the infimum of all parameters $p\in[0,1]$ such that the percolation probability is positive. By the Kolmogorov 0-1--law, 
for all $1\geq p>p_c$ the graph percolates and for all $0\leq p<p_c$ the graph does not percolate, almost surely.
We call the parameter range $(p_c,1]$ the 
\emph{supercritical phase} and $[0,p_c)$ the
\emph{subcritical phase}. 
\pagebreak[3]

\subsection*{Main result: Percolation phase transition}

Our main result characterizes the parameter regime where there is a percolation phase transition
in the weight-dependent random connection model.
\bigskip

 \begin{theorem}[Percolation phase transition] \label{ThmPercolationPhase} 
Suppose $\rho$ satisfies \eqref{RegularVarying} for some $\delta>1$. Then, for the weight-dependent random connection model with preferential attachment kernel, sum kernel or min kernel and parameters $\beta>0$, 
$0<\gamma<1$, we have that
 	\begin{enumerate}[(a)]
 		\item if $\gamma<\frac{\delta}{\delta+1}$, then $p_c>0$. \smallskip
		
 		\item If $\gamma >\frac{\delta}{\delta+1}$, then $p_c=0$.
 	\end{enumerate}
 \end{theorem}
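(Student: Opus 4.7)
The two parts of the theorem call for quite different techniques: (a), the existence of a subcritical phase, is approached via a refined first-moment path-counting argument that exploits the regular variation of~$\rho$; (b), the absence of a subcritical phase, is proved by an explicit hierarchical construction of an infinite cluster in $\G^p$ for arbitrarily small $p>0$.

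For part~(a), the plan is to bound the expected cluster size $\E[|\mathcal{C}(\x_0)|]$ of a typical vertex $\x_0$ by the expected number of self-avoiding paths starting from $\x_0$ in $\G^p$, and to show that the resulting series in~$p$ has positive radius of convergence. The probability that a given path is present in~$\G^p$ factorises as $p^n\prod_{i=1}^n\rho(g(t_{i-1},t_i)|x_i-x_{i-1}|^d)$. A naive integration of the positions using $\int_{\R^d}\rho(g r^d)\,\d r=g^{-1}$ gives $p^n\int_{(0,1)^n}\prod g^{-1}\,\d t_1\cdots\d t_n$, which diverges already for $n=2$ once $\gamma>1/2$ and hence cannot deliver the sharp threshold. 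The refinement is to exploit the regular variation to discount paths containing long edges: the tail bound $\rho(r)\lesssim r^{-\delta}$ should be absorbed via a careful decomposition of path edges into short and long, or equivalently via a spectral analysis of an integral operator on the mark space~$(0,1)$ whose kernel blends $g^{-1}$ with the profile tail. The critical condition $\gamma<\delta/(\delta+1)$ should emerge as the integrability threshold for the corresponding weight function or eigenfunction.

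For part~(b), the plan is to construct an infinite self-avoiding path $\x_0,\x_1,\x_2,\ldots$ in $\G^p$ with positive probability for any $p>0$. Fix scales $R_k=R_0\lambda^k\uparrow\infty$ and mark thresholds $t_k=t_0\mu^k\downarrow 0$ for parameters $\lambda>1$ and $\mu\in(0,1)$ to be tuned, and let $\x_k$ be a vertex of mark at most $t_k$ in the annulus $B_{R_{k+1}}\setminus B_{R_k}$. By the Poisson structure there are $\sim t_{k+1}R_{k+1}^d$ candidate vertices for $\x_{k+1}$; each candidate is a neighbour of $\x_k$ in $\G^p$ with probability $\sim p\,(g(t_k,t_{k+1})R_{k+1}^d)^{-\delta}$ by the regular variation of~$\rho$. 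The expected number of surviving candidates at step~$k$ is therefore $\sim p\cdot t_{k+1}\cdot g(t_k,t_{k+1})^{-\delta}\cdot R_{k+1}^{d(1-\delta)}$. Under $\gamma>\delta/(\delta+1)$ one can tune $\lambda$ and $\mu$ (with $R_0,t_0$ adjusted to $p$) so that this quantity grows without bound in~$k$; a branching-process argument then shows that the construction succeeds with positive probability.

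The main obstacle in~(a) is pinning down the correct weighted path-count to yield the sharp threshold rather than the naive bound $\gamma<1/2$; this hinges on a nontrivial spectral computation combining the factor $g^{-1}$ with the algebraic tail of~$\rho$. The main obstacle in~(b) is ensuring that the product of per-scale survival probabilities remains positive as $k\to\infty$: one needs the expected number of surviving candidates at each scale to \emph{grow}, not merely to stay bounded, so that a single fixed $p>0$ can overcome all the per-scale factors of~$p$. The threshold $\gamma=\delta/(\delta+1)$ appears naturally as the balance between the density of high-weight candidates and the decay of the profile across large distances.
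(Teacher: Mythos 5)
Both parts of your proposal have genuine gaps. For part (a), your plan to bound the cluster size by the expected number of self-avoiding paths, and then recover the sharp threshold by reweighting that count ("short/long edge decomposition or a spectral analysis of an integral operator"), cannot work as stated: for $\gamma>\tfrac12$ the degree of a typical vertex has infinite second moment ($\tau=1+\tfrac1\gamma<3$), so the expected number of self-avoiding paths of length two through a typical vertex is already infinite, and no reweighting of the full path count converges. The paper's argument restricts the count to \emph{shortcut-free} paths (which suffices, since the graph is locally finite), performs a ``graph surgery'' replacing $p\rho$ by $1\wedge pAx^{-\delta}$ so that non-consecutive vertices of a shortcut-free path obey a quantitative minimum-separation condition, and then decomposes every path into a \emph{skeleton} of age records plus excursions of strictly younger ``connectors'' encoded by binary trees. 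The key computation (Lemma~\ref{LemTwoConnection}) shows that, when $\gamma<\frac{\delta}{\delta+1}$, inserting one younger connector between two well-separated vertices costs at most a factor $C\,I_\rho$ relative to a direct edge; this is propagated over trees (Lemma~\ref{LemkConnection}), glued along the skeleton with the BK inequality, and completed by a truncated first moment distinguishing regular from irregular skeletons. Your proposal contains none of these ingredients, and the threshold $\gamma<\frac{\delta}{\delta+1}$ does not ``emerge'' from any integrability condition you actually set up.

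For part (b), your hierarchical construction uses \emph{direct} edges between consecutive old vertices, and the claimed tuning is arithmetically impossible. To find a vertex of mark at most $t_{k+1}$ you need volume $R_{k+1}^d\gtrsim t_{k+1}^{-1}$, and then (taking the min kernel, which dominates the other two and is the relevant one for a statement covering all three) the expected number of candidates directly linked to $\x_k$ is of order
\[
p\,t_{k+1}R_{k+1}^d\,\bigl(\beta^{-1}t_{k+1}^{\gamma}R_{k+1}^d\bigr)^{-\delta}
=p\,\beta^{\delta}\,t_{k+1}^{1-\gamma\delta}R_{k+1}^{d(1-\delta)}
\;\lesssim\; p\,\beta^{\delta}\,t_{k+1}^{\delta(1-\gamma)}\;\longrightarrow\;0
\]
for every $\gamma<1$ and $\delta>1$, no matter how you choose $\lambda$ and $\mu$; the per-scale success probabilities are not even bounded away from zero, so the condition $\gamma>\frac{\delta}{\delta+1}$ never enters your computation. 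The missing idea is exactly the point of the paper's Lemma~\ref{LemRobustness}: two distant old vertices should be joined through a single \emph{young connector}. An old vertex of mark $s$ has order $s^{-\gamma}$ potential connectors inside its own ball, each of which reaches the next, much older vertex (at distance of order $s^{-\alpha_2/d}$) with probability about $p\rho(s^{\alpha_1\gamma-\alpha_2})$, and precisely when $\gamma>\frac{\delta}{\delta+1}$ one can choose $\alpha_1,\alpha_2$ so that the expected number of successful connectors diverges polynomially in $1/s$; each step then succeeds with extreme probability, the failure probabilities are summable, and the infinite path exists with positive probability for every $p>0$.
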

 \bigskip
 
\textbf{Remarks:}
 \begin{enumerate}[(i)] 
 \item We obtain the following estimates for $p_c$ from our proof.
 		\begin{itemize}
 			\item if $\gamma<\frac{1}{2}$, then $p_c\geq \frac{1-2\gamma}{4\beta}$. 
 			\item if $\rho(x)\leq Ax^{-{\delta}}$ for $A>1$, and $\frac{1}{2}\leq \gamma <\frac{\delta}{\delta+1}$, then 
 			\[p_c>\tfrac{1}{A}\big(\tfrac{d(\delta(1-\gamma)-\gamma)(\delta-1)}{2^{d\delta+4}J(d)\beta\delta }\big)^{\delta},\] 
 	where $J(d)=\prod_{j=0}^{d-2}\int_0^\pi \sin^j(\alpha_j)\d \alpha_j$ is the Jacobian of the $d$-dimensional sphere coordinates.
 	 \end{itemize}
	
	\item 	If $\gamma<\frac{\delta}{\delta+1}$ one can follow the argument for long-range percolation, see \cite{newman1986}, and check that if $d\geq 2$ or if $d=1$ and $\delta<2$ there 
	exists $\beta_c<\infty$ such that the graph percolates for all $\beta>\beta_c$ and fixing such a $\beta$ we then get $p_c<1$.
	
	\item If $\gamma =\frac{\delta}{\delta+1}$,  {we do not expect a} universal result, i.e.\ it depends on the exact form of the
	kernel $g$ and the profile $\rho$ whether $p_c=0$ or not.
	
	\item {A variant} of our arguments show that  if $\gamma<\frac{\delta}{\delta+1}$ and either $d\geq 2$ or $d=1$ and $\delta<2$, 
	there exists $0<\beta_c<\infty$ such that
	there does not exist an infinite component in $\G$ if $\beta<\beta_c$ but it does exist if $\beta>\beta_c$. 
	{By scaling the Poisson process} we see that, if $\gamma$ and $\delta$ are as above, $\beta>0$ is 
	fixed and the intensity of the Poisson process is variable, say $\lambda>0$, there exists  $0<\lambda_c<\infty$ such that
	there does not exist an infinite component in $\G$ if $\lambda<\lambda_c$ but it does exist if $\lambda>\lambda_c$. If however
	$\gamma>\frac{\delta}{\delta+1}$ there exists an infinite component in $\G$ regardless of the values of $\lambda, \beta>0$. 
	
	
	\item { To understand the occurrence of the critical value \smash{$\gamma=\frac{\delta}{\delta+1}$} the calculation in Lemma~\ref{LemTwoConnection} is key. There it is shown   that for $\gamma<\frac{\delta}{\delta+1}$ and small $p$ the probability that two sufficiently distant vertices are connected using an intermediate vertex of smaller weight is smaller than the probability of existence of a direct edge. If  $\gamma>\frac{\delta}{\delta+1}$  a converse statement holds, and it is more likely that two vertices of large weight are connected by an intermediate vertex of small weight. The corresponding strategy enters into the construction of long paths in  Lemma~\ref{LemRobustness}.}
	
 	\item  A continuum version of the \emph{scale-free percolation} model introduced by Deijfen et al. \cite{DeijfenHofstadHooghiemstra2013, heydenreich2017}, is given by the \emph{product kernel}
 		\[g^{\text{prod}}(s,t)=\beta^{-1}s^\gamma t^\gamma,\]
	see \cite{Deprez2015, Deprez2018} for more details. For this model it is known that  there is no percolation phase transition if $\gamma> \frac12$, but there is one if $\gamma<\frac12$. As the product kernel and the preferential attachment kernel coincide for $\gamma=\frac12$, it follows that the scale-free percolation model has $p_c>0$ at the critical parameter $\gamma=\frac12$ for a general class of profile functions $\rho$. 
For more information how to translate the parameters of that model to our setting see \cite[Table 2]{gracar2019recurrence}. 

 	\item Our result also shows that for profile functions $\rho$ that decay faster than any polynomial, there always exists a subcritical phase. This applies in particular to the Boolean model mentioned above where $\rho$ is the indicator function, see also \cite{gouere2008}.
 \end{enumerate}

\pagebreak[3]

\subsection*{Robustness of age-based preferential attachment networks} 

Let $\G_0$ be the age-dependent random connection model with a vertex at the origin. That is, $\G_0$ is the graph with
\begin{itemize}
\item vertex set obtained from a standard Poisson point process in $\R^d\times (0,1]$ with
an additional point $\0=(0,U)$ placed at the origin with inverse weight, resp.\ birth time $U$, sampled independently from everything else from the uniform distribution on $(0,1]$, 
\item edges laid down independently with connection probabilities given by the preferential attachment kernel { \eqref{PAKernel}}, i.e.\ 	
\[\varphi((x,t),(y,s))=\rho(g^{\text{pa}}(t,s)|x-y|^d).\]
\vspace{-7mm}
\end{itemize}
Theorem~\ref{ThmPercolationPhase}  applies to the graph $\G_0$, which plays a special role as weak local limit in the sense of Benjamini and Schramm~\cite{benjamini2001} of the age-based spatial preferential attachment model, which we now describe.
\medskip

Let $\mathbb{T}^d_a=(-a^{1/d}/2, a^{1/d}/2]^d$ be the $d$-dimensional torus of volume $a$, endowed with the torus metric $d$ defined by
	\[d(x,y)=\min\big\{|x-y+u|: u\in\{-a^{1/d},0,a^{1/d}\}^d\big\}, \text{ for } x,y\in\mathbb{T}_a^d.\]
The \emph{age-based (spatial) preferential attachment model} is a growing sequence of graphs $(\G_t)_{t\geq 0}$ on 
$\mathbb{T}^d_1$ defined as follows: 
\begin{itemize}
\item The graph $\G_t$ at time $t=0$ has neither vertices nor edges. 
	\item Vertices arrive successively after exponential waiting times with parameter one 
	and are placed uniformly on $\mathbb{T}^d_1$. We denote a vertex created at time $s$ and placed in 
	$y\in\mathbb{T}^d_1$ by $\y=(y,s)$.  
	\item Given the graph $\G_{t-}$, a vertex $\x=(x,t)$, born at time $t$ and placed at $x$ is connected by an edge to each existing vertex $\y=(y,s)$ independently with conditional probability
	\begin{equation}
		\rho\left(\mbox{${\frac{1}{\beta}}\frac{t \, d(x,y)^d}{\left(t/s\right)^\gamma}$}\right). \label{PAProb}
	\end{equation}
\end{itemize}
\vspace{-5mm}
Note that the connection probability has the same form as the previously defined connection function $\varphi$, where the Euclidean distance is replaced by the torus distance.%
\medskip%

We say that such a network $(\G_t)_{t\geq 0}$ has a \emph{giant component} if its largest connected component is asymptotically of linear size. More precisely, let $|\C_t|$ be the size of the largest component in $\G_t$ . Then, $(\G_t)_{t\geq 0}$ has a giant component if
 	\[\lim_{\varepsilon\downarrow 0}\limsup_{t\to\infty}\P\big\{\tfrac1t {|\C_t|}<\varepsilon\big\}=0.\]
We say $(\G_t)_{t\geq 0}$ is \emph{robust} if the percolated sequence $(\G^p_t)_{t \geq0}$ has a giant component for every retention parameter $p>0$. Otherwise we say the network is \emph{non-robust}. The idea of this definition is that a random attack cannot significantly affect the connectivity of a robust network.\pagebreak[3]
\medskip

\begin{theorem} \label{ThmRobustness}
Suppose $\rho$ satisfies \eqref{RegularVarying}  for some $\delta>1$
and  $(\G_t)_{t\geq 0}$ is the age-based preferential attachment network with  parameters $\beta>0$ and
$0<\gamma<1$. Then the network  $(\G_t)_{t\geq 0}$ is robust if $\gamma>\frac{\delta}{\delta+1}$, but
non-robust if $\gamma<\frac{\delta}{\delta+1}$.
\end{theorem}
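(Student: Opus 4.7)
The plan is to deduce Theorem~\ref{ThmRobustness} from Theorem~\ref{ThmPercolationPhase} by exploiting the fact, established in \cite{GracarEtAl2019}, that the age-dependent random connection model $\G_0$ is the weak local limit (in the Benjamini-Schramm sense) of the age-based spatial preferential attachment network $(\G_t)_{t\geq 0}$. Since retaining each edge independently with probability $p$ is a continuous local operation, the percolated torus graph $\G_t^p$ converges locally to $\G_0^p$. Writing $\theta(p)=\P\{|\C(\0)|=\infty\text{ in }\G_0^p\}$ for the percolation probability of the root in the limit model, Theorem~\ref{ThmPercolationPhase} tells us that $\theta(p)=0$ for $p<p_c$ and $\theta(p)>0$ for $p>p_c$, where $p_c>0$ if $\gamma<\tfrac{\delta}{\delta+1}$ and $p_c=0$ if $\gamma>\tfrac{\delta}{\delta+1}$.

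For the non-robust direction ($\gamma<\tfrac{\delta}{\delta+1}$) I would fix a retention parameter $p<p_c$, so that clusters in $\G_0^p$ are almost surely finite. For any $K\in\N$, weak local convergence implies
\[\lim_{t\to\infty}\tfrac1t\,\E\bigl[\#\{\x\in\G_t^p:|\C(\x)|\geq K\}\bigr]=\P\{|\C(\0)|\geq K\text{ in }\G_0^p\},\]
and the right hand side tends to $0$ as $K\to\infty$. A Markov inequality then yields $\P\{|\C_t|\geq\varepsilon t\}\to 0$ after choosing $K$ so large that $\P\{|\C(\0)|\geq K\}<\varepsilon^2$, showing that no giant component is present and so the network is non-robust.

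For the robust direction ($\gamma>\tfrac{\delta}{\delta+1}$) Theorem~\ref{ThmPercolationPhase} gives $p_c=0$, hence $\theta(p)>0$ for every $p>0$, and local convergence produces a positive density of vertices lying in clusters of arbitrarily large size in $\G_t^p$. The serious point is to show that these large clusters coalesce into one single component of linear size. I would follow the standard route: first establish uniqueness of the infinite component in $\G_0^p$ via a Burton-Keane type argument adapted to weight-dependent random connection models (translation invariance in $\R^d$ and the marked Poisson structure make the classical proof applicable, see \cite{Meester96}), and then use a two-vertex second moment computation on the torus, combined with a sprinkling argument that splits $p$ into $p_1+p_2-p_1p_2$, to argue that with high probability two independently sampled vertices lying in clusters of size at least $K$ are connected in $\G_t^p$. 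This gives $\P\{|\C_t|\geq\tfrac12\theta(p)t\}\to 1$, which implies the giant component property.

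The main obstacle lies in the robust direction, specifically in passing from local convergence and positive percolation density in $\G_0^p$ to a \emph{unique} macroscopic component on the torus. Unlike in the classical sparse random graph setting, long edges in the preferential attachment kernel make the standard sprinkling arguments more delicate, and one has to make sure that the uniqueness argument for $\G_0^p$ carries over to torus volumes growing like $t$; I expect this to be handled by combining the ergodicity of the Palm version of $\G_0$ with the explicit construction of long paths through vertices of small birth time used in Lemma~\ref{LemRobustness}, which supplies the necessary connectivity between large clusters at essentially every scale.
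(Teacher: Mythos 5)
Your non-robustness argument is essentially the paper's: both directions of the proof in Section~\ref{SecRobustness} run through the rescaled graphs $h_t(\G_t^p)$ and the weak law of large numbers \eqref{WLLN} (Theorem~7 of \cite{JacobMoerters2015}, after Penrose--Yukich \cite{PenroseYukich2003}), and for $\gamma<\frac{\delta}{\delta+1}$ the paper applies it to the indicator that the root's component has size at most $k$, which is the same computation as your ``density of vertices in clusters of size $\geq K$'' plus Markov. That half of your proposal is fine.

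The robust direction is where there is a genuine gap. You correctly identify the crux --- passing from a positive density of vertices in large clusters of $\G_t^p$ to a \emph{single} component of linear size --- but you do not actually close it: the Burton--Keane uniqueness of the infinite cluster in $\G_0^p$ lives in infinite volume and does not by itself transfer to torus volumes growing like $t$, and the sprinkling/second-moment coalescence step you sketch is precisely the delicate point in a model with heavy-tailed weights and long edges; saying you ``expect this to be handled'' by combining ergodicity with Lemma~\ref{LemRobustness} is a plan, not a proof. The paper avoids this difficulty entirely by a different choice of functional: for finite $t$ it takes $\xi_t(\x,G)$ to be the indicator that the root is connected \emph{to the component of the oldest vertex} of the finite graph, and $\xi_\infty$ the indicator of belonging to an infinite component of $\G_0^p$. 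Then $\frac1t\sum_{\x}\xi_t$ is exactly (one over $t$ times) the size of one specific component, so once the convergence $\xi_t(\0,h_t^0(\G_t^p))\to\xi_\infty(\0,\G_0^p)$ in probability is established --- this is the content of the argument in \cite{JacobMoerters2017}, adapted to the age-based model, and it is here that constructions of paths to ever older vertices in the spirit of Lemma~\ref{LemRobustness} enter --- the law of large numbers \eqref{WLLN} immediately gives a component of asymptotic density $\theta(p)>0$, with no uniqueness or sprinkling argument needed. If you want to complete your route instead, you must either prove the torus coalescence estimate you postulate or replace it by the paper's ``connect to the oldest vertex'' device; as written, the robustness half of your argument is not a proof.
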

\medskip
\pagebreak[3]

\textbf{Remarks:} 
\begin{enumerate}[(i)]
\item As $\tau=1+\frac1\gamma$ the condition $\gamma<\frac{\delta}{\delta+1}$ is equivalent to
	$\tau>2+\frac1{\delta}$. Hence the qualitative change in the behaviour does not occur when $\tau$
	passes the critical value $3$ as in the classical scale-free network models without spatial correlations, 
	but when it passes a strictly smaller value. This shows
	the significant effect of clustering on the network topology.
\item Replacing $(t/s)^\gamma$ in \eqref{PAProb} by $f(\text{indegree of } (y,s) \text{ in }\G_{t-})$, for some increasing function $f$, we obtain  the spatial preferential attachment model of~\cite{JacobMoerters2015}. If $f$ is a function of asymptotic linear slope $\gamma$, then $(t/s)^\gamma$ is the asymptotic expected degree at time $t$ of a vertex born at time $s$. The age-based preferential attachment model is therefore a simplification and approximation of the spatial preferential attachment model showing very similar behaviour. 
In \cite{JacobMoerters2017} Jacob and M\"orters show that the spatial preferential attachment model is robust for \smash{$\gamma>\frac\delta{\delta+1}$ }but it remains an open problem to show non-robustness for
\smash{$\gamma<\frac\delta{\delta+1}$} for this model. Theorem~\ref{ThmRobustness} is a strong indication that this is the case.
\end{enumerate}

\pagebreak[3]
The remainder of the paper is organized as follows. In Section~\ref{SecPhaseTrans} we prove
existence of a percolation phase transition claimed in Theorem~\ref{ThmPercolationPhase}(a). This proof is based on a novel path decomposition argument and constitutes the main new contribution of this paper.
The remaining proofs are similar to the corresponding arguments for spatial preferential attachment 
in \cite{JacobMoerters2015, JacobMoerters2017}, namely the absence of a phase transition in Theorem~\ref{ThmPercolationPhase}(b) in Section~\ref{SecSupercrit} and the proof of Theorem~\ref{ThmRobustness}, in Section~\ref{SecRobustness}, and will only be  sketched.
Some technical calculations are deferred to the appendix.


\section{Existence of a subcritical phase} \label{SecPhaseTrans}

In this section, we prove Theorem 1.1(a). This proof works for all kernels $g$ which are bounded from below by a constant multiple of
the preferential attachment kernel~$g^{\rm pa}$, similarly the proof of Theorem 1.1 (b) given in Section 3 works for all kernels bounded 
from above by a multiple of the min kernel~$g^{\rm min}$. 

\subsection*{Graphical construction of the  model} We explicitly construct the weight-dependent random connection model on a given countable set $\mathcal{Y}\subset \mathbb{R}^d\times (0,1]$. Let $E(\mathcal{Y})=\set{\set{\x,\y}:\x,\y\in\mathcal{Y}}$ be the set of potential edges and $\mathcal{V}=(\mathcal{V}(e))_{e\in E(\mathcal{Y})}$ a sequence in $[0,1]$ indexed by the potential edges. We then construct the graph $\mathcal{G}_{\varphi}(\mathcal{Y},\mathcal{V})$ through its vertex set $\mathcal{Y}$ and edge set
$$\set[\big]{\set{\x,\y}: \mathcal{V}(\{\x,\y\})\leq\varphi(\x,\y)}.$$
Let $\X$ be a Poisson point process on $\mathbb{R}^d\times (0,1]$ and $\mathcal{U}=(\mathcal{U}(e))_{e\in E(\X)}$ an independent  sequence of in $(0,1)$ uniformly distributed random variables, then $\G=\mathcal{G}_{\varphi}(\X,\mathcal{U})$ is the  weight-dependent random connection model with connectivity function $\varphi$. If
$p\in(0,1]$ then $\G^p=\mathcal{G}_{p\varphi}(\X,\mathcal{U})$ is the percolated model with retention parameter~$p$.%
\pagebreak[3]%

Add to $\X$ a vertex $\mathbf{0}=(0,U)$, placed at the origin with inverse weight $U$ distributed uniformly on $(0,1)$, independent of everything else, and denote the resulting point process by $\X_0$. 
Insert further independent uniformly distributed random variables $(U_{\{\0,\x\}})_{\x\in\X}$ into the family $\mathcal{U}$ and denote the result by~$\mathcal{U}_0$ and the underlying probability measure by $\P_0$. The graph 
$\G_0^p=\mathcal G_{p \varphi}(\X_0,\mathcal{U}_0)$ is the Palm version of $\G^p$, we denote its law by
$\P^p_0$ and expectation by $\E_0^p$. Writing $\P_{(x,t)}^p$ for the law of $\G^p$ conditioned on the event that $(x,t)$ is a vertex of $\G^p$, we have 
\smash{$\P^p_0=\P^p_{(0,u)} \d u$}.  Roughly speaking, this construction ensures that $\0$ is a 
typical vertex in $\G_0^p$. \\[-8mm]

\subsection*{Percolation} 

For two given points $\x$ and $\y$, we denote by $\{\x\sim\y\}$ the event  that $\x$ and $\y$ are connected by an edge in 
$\G_0^p$. We define $\set{\0\leftrightarrow\infty}$ as the event that $\0=\x_0$ is the starting point of an \emph{infinite self-avoiding path} $(\x_0, \x_1,\x_2,\dots)$ in $\G_0^p$. That is, $\x_i\in\X$ for all~$i$, $\x_i\neq\x_j $ for all $i\neq j$, and $\x_i\sim\x_{i+1}$ for all $i\geq 0$. If $\{\0\leftrightarrow\infty\}$ occurs, we say that $\G_0^p$ percolates. We denote the percolation probability by
	\begin{equation}
		\theta(p)=\P_0^p\left\{\0\leftrightarrow\infty\right\} = \int_0^1 \d u \ \P^p_{(0,u)}\{(0,u)\leftrightarrow\infty\},\label{percolationProb}
	\end{equation}
which can be interpreted as the probability that a typical vertex belongs to the infinite cluster. 
We define the critical percolation parameter as
	\begin{equation}
		p_c:=\inf\left\{p\in(0,1]: \theta(p)>0\right\}.
	\end{equation}


\subsection*{Existence of a subcritical phase: Case $\gamma<\frac12$.} \label{SecNonPercolativeLeq}


We fix $\delta>1,\beta>0$ and $\gamma<\frac\delta{\delta+1}$. Since $g^\text{pa}\leq g^\text{min}\leq { 2^d g^\text{sum}}$, 
we have
	\begin{align*}
	\P_0\{\0\leftrightarrow\infty \text{ in }\G^p_0(\rho\circ g^\text{pa})\} & \geq \P_0\{\0\leftrightarrow\infty \text{ in }\G^p_0(\rho\circ g^\text{min})\} 
	\\ & \geq \P_0\{\0\leftrightarrow\infty \text{ in }\G^{2^dp}_0(\tilde\rho\circ g^\text{sum})\} \label{eqCoupling}
	\end{align*}%
	\pagebreak[3]%
for $\tilde\rho(x)= \frac{1}{2^d} \rho(2^dx)$ by a simple coupling argument. Thus, we focus on the preferential attachment kernel and show that we can choose a $p>0$ such that $\theta(p)=0$. Consequently, we work in the following exclusively in the age-dependent random connection model, and we therefore use the corresponding terminology. For a vertex $\x=(x,t)$ we refer to $t$ as the \emph{birth time} of $\x$ and, for another vertex $\y=(y,s)$ with $s<t$, we say $\y$ is \emph{older} than $\x$. We also say $\y$ is born before $\x$, or before $t$. 

We use a \emph{first moment method} approach for the number of paths of length $n$. We start with $\gamma<\frac12$
and explicitly calculate the expected number of such paths. This turns out to be  independent of the spatial geometry of the model and therefore cannot be used to prove the statement for \smash{$\frac12\leq\gamma<\frac\delta{\delta+1}$.}
We denote by $\mathbf{E}$ the expectation of a Poisson point process on $\R^d\times(0,1]$ of unit intensity, by $\P^p_{\X}$ the law of $\G^p$ conditioned on the whole vertex set $\X$
and by $\P^p_{\x_1,\dots,\x_n}$ the law of $\G^p$ conditioned on the event that $\x_1,\dots,\x_n$ are points of the vertex set. 
\bigskip

\begin{lemma}\label{LemPercolationLeq} {If $0<\gamma< \frac12$, then $\theta(p)=0$ for all $p<\tfrac{1-2\gamma}{4\beta}$ or, equivalently, $p_c\geq \tfrac{1-2\gamma}{4\beta}$}.
\end{lemma}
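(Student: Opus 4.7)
The plan is a first moment method: bound the percolation probability by the expected number $\E^p_0[N_n]$ of self-avoiding paths of length $n$ emanating from $\0$, and show this quantity tends to zero whenever $p<\tfrac{1-2\gamma}{4\beta}$. Indeed, the event $\{\0\leftrightarrow\infty\}$ implies the existence of a self-avoiding path of every finite length from $\0$, so Markov's inequality gives $\theta(p)\leq\E^p_0[N_n]$. Applying the multivariate Mecke formula to ordered $n$-tuples of distinct Poisson points together with independence of the edges in the percolated model yields
\begin{equation*}
\E^p_0[N_n] = \int_0^1\!\d u\int_{((0,1)\times\R^d)^n} \prod_{i=0}^{n-1} p\,\rho\big(\gp(t_{i},t_{i+1})\,|x_i-x_{i+1}|^d\big)\,\d(x_1,t_1)\cdots\d(x_n,t_n),
\end{equation*}
with $(x_0,t_0):=(0,u)$. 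The telescoping substitution $z_i = x_i - x_{i-1}$ decouples the spatial integrals, and each factor evaluates by \eqref{IntegrabilityCond} to $1/\gp(t_{i-1},t_i)=\beta K(t_{i-1},t_i)$, where $K(s,t):=(s\vee t)^{\gamma-1}(s\wedge t)^{-\gamma}$. The problem is thereby reduced to bounding the iterated kernel integral $\int_0^1\!\d u\int_{(0,1)^n}K(u,t_1)K(t_1,t_2)\cdots K(t_{n-1},t_n)\,\d t_1\cdots\d t_n$ by $C^n$ for some $C<\tfrac{1}{p\beta}$.

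The main obstacle is that $\int_0^1 K(s,t)\,\d t$ diverges like $s^{-\gamma}$ as $s\downarrow 0$, so no naive $L^\infty$-bound on the integral operator $T$ with kernel $K$ is available. The remedy is to test against the symmetric power $w(t)=t^{-1/2}$, which is natural because $K$ is symmetric and this exponent sits midway between the two singularities of $K$. Splitting the defining integral at $t=s$ and exploiting $\gamma<\tfrac12$ yields the explicit computation
\begin{equation*}
\int_0^1 K(s,t)\,w(t)\,\d t = \frac{2\,s^{-1/2}-s^{-\gamma}}{1/2-\gamma}\;\leq\;\frac{4}{1-2\gamma}\,w(s),\qquad s\in(0,1],
\end{equation*}
uniformly in $s$, with the supremum attained as $s\downarrow 0$. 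Optimising the exponent of the test function over powers $t^{-\alpha}$ with $\alpha\in(\gamma,\tfrac12)$ singles out $\alpha=\tfrac12$ and so produces the constant $\tfrac{4}{1-2\gamma}$ appearing in the claim.

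Since $w(t)\geq 1$ on $(0,1]$, I would replace the innermost integrand $1$ by $w(t_n)$ and then apply the super-eigenfunction inequality $n$ times, obtaining $\int_{(0,1)^n} K(u,t_1)\cdots K(t_{n-1},t_n)\,\d t_1\cdots\d t_n \leq \bigl(\tfrac{4}{1-2\gamma}\bigr)^n w(u)$. Integrating the trivial bound $\int_0^1 u^{-1/2}\,\d u = 2$ over $u$ then gives
\begin{equation*}
\E^p_0[N_n] \;\leq\; 2\,\Big(\frac{4p\beta}{1-2\gamma}\Big)^n,
\end{equation*}
which vanishes as $n\to\infty$ precisely when $p<\tfrac{1-2\gamma}{4\beta}$, and so $\theta(p)=0$. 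Once the weighted bound on $T$ is established, all remaining steps are routine bookkeeping; the essential ingredient is the identification of $w(t)=t^{-1/2}$ as the correct test function against which the operator $T$ admits a clean super-eigenfunction estimate.
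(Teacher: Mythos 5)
Your proposal is correct and follows essentially the same route as the paper: a first moment bound on the number of length-$n$ paths from $\0$, Mecke's equation, decoupling of the spatial integrals via \eqref{IntegrabilityCond}, and an exponential bound $\bigl(\tfrac{4p\beta}{1-2\gamma}\bigr)^n$ on the resulting iterated kernel integral. The only difference is that where the paper invokes Lemma~17 of \cite{JacobMoerters2017} (a bound parametrized by $\alpha\in(\gamma-1,-\gamma)$ and then optimized, with minimum at $\alpha=-\tfrac12$), you prove the same estimate directly via the test function $w(t)=t^{-1/2}$, which is precisely that optimal choice, so your argument is a self-contained version of the cited step.
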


\begin{proof}
 We set $\0=\x_0=(0, t_0)$ and get
	\begin{align*}
		\theta(p) & =\lim_{n\to\infty} \P^p_0\{\exists \text{ a path of length }n\text{ starting in }\x_0\}\\
					   &\leq \lim_{n\to\infty}\int_0^1 \d t_0 \ \mathbf{E}\bigg[\underset{x_i\neq x_j\forall i\neq j}{\sum_{\x_1,\dots,\x_n\in\X}}\P^p_{\X\cup\{(0,t_0)\}}\Big(\bigcap_{j=1}^n\{\x_j\sim\x_{j-1}\}\Big)\bigg]. 
	\end{align*}
The inner probability is a measurable function of the Poisson process and the points
$\x_1,\dots,\x_n$ and by Mecke's equation \cite[Theorem 4.4]{LastPenrose2017} we get,
with $\eta$ denoting an independent copy of $\X$, 
	\begin{align*}
	& \int_0^1 \d t_0 \ \int\limits_{(\mathbb{R}^d\times(0,1])^n}\bigotimes_{j=1}^n\d\x_j \ \mathbf{E}\left[\P^p_{\eta\cup\{(0,t_0),\x_1,\dots,\x_n\}}\left(\bigcap_{j=1}^n\{\x_{j-1}\sim\x_j\}\right)\right] \\
	&\quad= \int_0^1 \d t_0 \ \int\limits_{(\mathbb{R}^d\times(0,1])^n}\bigotimes_{j=1}^n\d\x_j \ 
	\P^p_{\x_0,\ldots,\x_n}\left(\bigcap_{j=1}^n\{\x_{j-1}\sim\x_j\}\right).
	\end{align*}
Given the vertices, edges are drawn independently so we get by writing $\x_j=(x_j,t_j)$ for all \(j\in\{1,\dots,n\}\) that the previous expression equals
	\begin{align*}
		& \int_0^1 \d t_0 \ \int\limits_{(\mathbb{R}^d\times(0,1])^n}\bigotimes_{j=1}^n\d(x_j,t_j) \bigg(\prod_{j=1}^n p\rho\big({ g^{\text{pa}}(t_{j-1}, t_j)}|x_j-x_{j-1}|^d\big)\bigg) \\
	&\quad= p^n\beta^n\int_0^1 \d t_0 \int_0^1\d t_1 \dots\int_0^1 \d t_n \bigg(\prod_{j=1}^n (t_j\wedge t_{j-1})^{-\gamma}(t_j\vee t_{j-1})^{\gamma-1}\bigg),
	\end{align*}
where we used the normalization condition \eqref{IntegrabilityCond}. Since $\gamma <\frac12$, Lemma~17 of \cite{JacobMoerters2017} states that
	\[\int_0^1 \d t_0 \int_0^1\d t_1 \dots\int_0^1 \d t_n \bigg(\prod_{j=1}^n (t_j\wedge t_{j-1})^{-\gamma}(t_j\vee t_{j-1})^{\gamma-1}\bigg)\leq \bigg(\frac{1}{1+\alpha-\gamma}-\frac{1}{\alpha+\gamma}\bigg)^n,\]
for $\alpha\in(\gamma-1,-\gamma)$. The minimum of the right-hand side over this non-empty interval
equals $\frac4{1-2\gamma}$ and thus, setting $p<\frac{1-2\gamma}{4\beta}$ we achieve
	\[\theta(p)\leq \lim_{n\to\infty}\big(\tfrac{4p\beta}{1-2\gamma}\big)^n=0.\]
	\ \\[-12mm]
\end{proof}
\pagebreak[3]

\subsection*{Existence of a subcritical phase: Case $\gamma\geq\frac12$.}  \label{SecNonPercolativGeq}
We now turn to the more interesting case when $\gamma\in[\frac12,\frac\delta{\delta+1})$ where we have to use the spatial properties of our model in order to prove our claim. Intuitively, as ``powerful'' vertices are typically far apart from each other, in order to create an infinite path in this spatial network one has to use long edges often enough to reach them. Therefore, where the long edges are used is the crucial and most interesting part of a path. 
On the other hand $\G$ is locally dense. Therefore, considering paths that stay for a long time in a neighbourhood of a vertex before using long edges greatly increases the number of possible paths we can construct. For $\gamma<\frac12$, the degrees of typical vertices are small enough so that the number of possible paths does not increase too much. This is not true anymore for {$\gamma>\frac12$} where the degree distribution has an infinite second moment. Thus, it becomes difficult to bound the probability of the existence of an arbitrary path of length $n$. In order to prove the existence of a subcritical phase, we start by explaining how to limit our counting to paths that are not stuck in local clusters. Then, we define what we call the \emph{skeleton} of a path, which will help with counting the valid paths. As we will see, the skeleton is a collection of key vertices from a path ordered in a specific birth-time structure. In the end, we will use these paths to complete the proof of Theorem \ref{ThmPercolationPhase}(a). 
\pagebreak[3]

\paragraph{Shortcut-free paths} \label{SubsubShortCut}
Let $P=(v_0,v_1,v_2,\dots)$ be a path in some graph $G$. We say $(v_i,v_j)$ is a \emph{shortcut} in $P$ if $j>i+1$ and $v_i$ and $v_j$ are connected by an edge in \(G\). If $P$ does not contain any shortcut, we say $P$ is \emph{shortcut-free}. If $G$ is locally finite, i.e.\ all vertices of $G$ are of finite degree, then there exists an infinite path if and only if there exists one that is also shortcut-free. To see how an infinite path $P=(v_0,v_1,v_2\dots)$  in $G$ can be made shortcut-free define $i_0=\max\{i\geq 1: v_i\sim v_0\}$. If $i_0=1$, then $v_1$ is the only neighbour $v_0$ has in~$P$. If $i_0\geq 2$, then $(v_0,v_{i_0})$ is a shortcut in $P$ so we remove the vertices $v_1,\dots,v_{i_0-1}$ from $P$. We have thus removed all shortcuts starting from $v_0$ and since $v_0\sim v_{i_0}$ the new $P$ is still a path. We define analogously $i_k=\max\{i>i_{k-1}: v_i\sim v_{i_{k-1}}\}$ for every $k\geq 1$ and remove the intermediate vertices as needed. The resulting path $(v_0,v_{i_0},v_{i_1},\dots)$ is then still infinite but also shortcut-free. 

\paragraph{Skeleton of a path}\label{SubSubSkeleton}
Let $P=((v_0,t_0),(v_1,t_1),\dots,(v_n,t_n))$ be a path of length $n$ in some graph $G$ where every vertex $v_i$ carries a distinct birth time $t_i$. Then, precisely one of the vertices in $P$ is the oldest; let $k_\text{min}=\{k\in\{0,\dots,n\}:t_k<t_j, \ \forall j\neq k\}$ be its index. Starting from $(v_0,t_0)$, we now choose the first vertex of the path that has birth time smaller than $t_0$ and call it $(v_{i_1},t_{i_1})$. Continuing from this vertex, we choose the next vertex of the path that is older still, call it $(v_{i_2},t_{i_2})$ and continue analogously until we reach the oldest vertex \((v_{k_\text{min}},t_{k_{\text{min}}})\). We then repeat the same procedure starting from the end vertex $(v_n,t_n)$ and going backwards across the indices. The union of the two subset of vertices is what we call the \emph{skeleton} of the path $P$. More precisely, for every path $P=((v_0,t_0),\dots,(v_n,t_n))$, there exists unique $0\leq k\leq n$ and $k\leq m\leq n$ as well as a set of indices $\{i_0,i_1,\dots,i_{k-1}, i_k, i_{k+1},\dots, i_{m}\}$ such that 
\begin{align*}
	& i_0=0, i_k=k_{\text{min}}, \text { and } i_m=n \text{ as well as } \\
	& t_{i_{\ell-1}}>t_{i_\ell} \text{ and } t_{i}>t_{i_{\ell-1}}, \ \forall i_{\ell-1}<i<i_\ell, \text{ for } \ell=1,\dots, k \text{ and } \\
	& t_{i_{\ell-1}}<t_{i_\ell} \text{ and }t_i>t_{i_\ell}, \ \forall i_{\ell-1}<i<i_\ell, \text{ for } \ell=k+1,\dots m.
\end{align*}
The \emph{skeleton of $P$} is then given by $((v_{i_j},t_{i_j}))_{j=0,\dots,m}$. We say it is of length $m$ and has its minimum at $k$. \pagebreak[3]

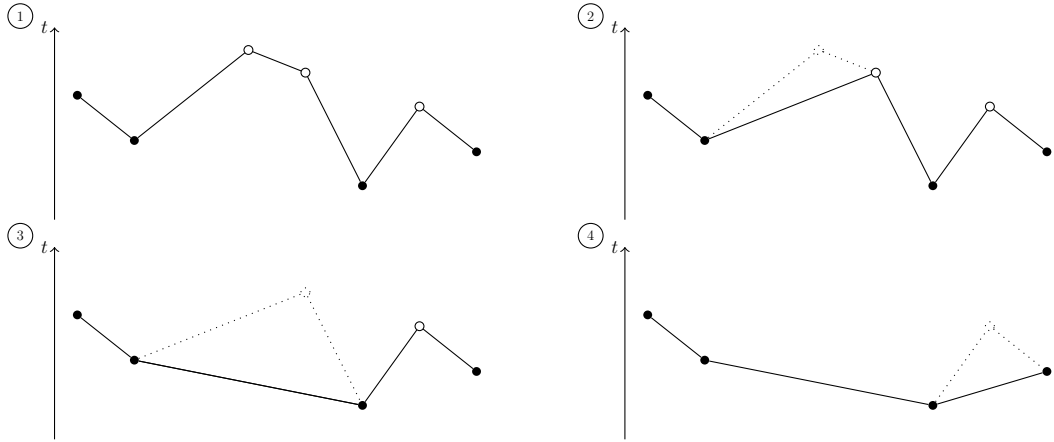
\begin{figure}
\begin{center}
			\begin{tikzpicture}[scale=0.3, every node/.style={scale=0.3}]
				\node (Z) at (-2.5,8.5)[circle, draw,scale=1.5]{1};
				\draw[->] (-1,-0.5) -- (-1, 8)
					node[left,scale=2] {$t$};
				\node (A) at (0,5)[circle, fill=black, label ={}]{};	
    			\node (B) at (2.5,3)[circle, fill = black, label={}] {};
    			\node (D) at (7.5,7)[circle, draw, label = {}] {};
    			\node (E) at (10, 6)[circle, draw, label={}] {};
    			\node (F) at (12.5,1) [circle, fill= black, label={} ] {};
    			\node (G) at (15,4.5) [circle, draw, label={}]{};
    			\node (H) at (17.5,2.5)[circle, fill=black, label={}]{};
    			\draw (A) to (B);
				\draw (B) to (D);
				\draw (D) to (E);
				\draw (E) to (F);
				\draw (F) to (G);
				\draw (G) to (H);
			\end{tikzpicture}
			\hspace{1 cm}
			\begin{tikzpicture}[scale=0.3, every node/.style={scale=0.3}]
				\node (Z) at (-2.5,8.5)[circle, draw,scale=1.5]{2};
				\draw[->] (-1,-0.5) -- (-1, 8)
					node[left,scale=2] {$t$};
				\node (A) at (0,5)[circle, fill=black, label ={}]{};
    			\node (B) at (2.5,3)[circle, fill = black, label={}] {};
    			\node (D) at (7.5,7)[circle, draw, label = {}, dotted] {};
    			\node (E) at (10, 6)[circle, draw, label={}] {};
    			\node (F) at (12.5,1) [circle, fill= black, label={} ] {};
    			\node (G) at (15,4.5) [circle, draw, label={}]{};
    			\node (H) at (17.5,2.5)[circle, fill=black, label={}]{};
    			\draw (A) to (B);
				\draw[dotted] (B) to (D);
				\draw[dotted] (D) to (E);
				\draw (B) to (E);
				\draw (E) to (F);
				\draw[] (F) to (G);
				\draw[] (G) to (H);
			\end{tikzpicture}
			\begin{tikzpicture}[scale=0.3, every node/.style={scale=0.3}]
				\node (Z) at (-2.5,8.5)[circle, draw,scale=1.5]{3};
				\draw[->] (-1,-0.5) -- (-1, 8)
					node[left,scale=2] {$t$};
				\node (A) at (0,5)[circle, fill=black, label ={}]{};
    			\node (B) at (2.5,3)[circle, fill = black, label={}] {};
    			\node (D) at (7.5,7)[label = {}] {};
    			\node (E) at (10, 6)[circle, draw, label={}, dotted] {};
    			\node (F) at (12.5,1) [circle, fill= black, label={} ] {};
    			\node (G) at (15,4.5) [circle, draw, label ={}]{};
    			\node (H) at (17.5,2.5)[circle, fill=black, label={}]{};
    			\draw (A) to (B);
				\draw (B) to (F);
				\draw[dotted] (B) to (E);
				\draw[dotted] (E) to (F);
				\draw (B) to (F);
				\draw[] (F) to (G);
				\draw[] (G) to (H);
			\end{tikzpicture}
			\hspace{1 cm}
			\begin{tikzpicture}[scale=0.3, every node/.style={scale=0.3}]
				\node (Z) at (-2.5,8.5)[circle, draw,scale=1.5]{4};
				\draw[->] (-1,-0.5) -- (-1, 8)
					node[left,scale=2] {$t$};
				\node (A) at (0,5)[circle, fill=black, label ={}]{};
    			\node (B) at (2.5,3)[circle, fill = black, label={}] {};
    			\node (D) at (7.5,7)[label = {}] {};
    			\node (E) at (10, 6)[label={}] {};
    			\node (F) at (12.5,1) [circle, fill= black, label={} ] {};
    			\node (G) at (15,4.5) [circle, draw, dotted, label={}]{};
    			\node (H) at (17.5,2.5)[circle, fill=black, label={}]{};
    			\draw (A) to (B);
				\draw[dotted] (F) to (G);
				\draw[dotted] (G) to (H);
				\draw (B) to (F);
				\draw (F) to (H);
			\end{tikzpicture}
			\caption{A path where a vertex's birth time is denoted on the $t$-axis. The vertices of the skeleton are in black. We successively remove all local maxima, starting with the youngest, and replace them by direct edges until the path, only containing the skeleton vertices, is left.}
	\label{FigSkeleton}
\end{center}
\end{figure}

We now give an alternative construction of the skeleton of \(P\), which we call the \emph{local maxima construction}. A vertex $(v_i,t_i)\in P\backslash\{(v_0,t_0),(v_n,t_n)\}$
is called  a \emph{local maximum} if $t_i>t_{i-1}$ and $t_i>t_{i+1}$. 
We  successively remove all local maxima from $P$ as follows: First, take the local maximum in $P$ with the greatest birth time, remove it from $P$ and connect its former neighbours by a direct edge. In the resulting path, we take the local maximum of greatest birth time and remove it, repeating until there is no local maximum left, see Figure~\ref{FigSkeleton}. Therefore, the final path is decreasing in birth times of its vertices until the oldest vertex is reached, and only increasing in birth times afterwards. 
Hence, it  is the uniquely determined skeleton of the path. 
Note that the skeleton is not necessarily an actual path of the graph. 
{Actually, the skeleton of a shortcut-free path is not itself a path unless the path is its own skeleton.}%

\paragraph{Graph surgery}
In order to  bound the probability of existence of an infinite self-avoiding path in $\G^p_0$ starting in the origin
we increase the number of short edges in $\G^p_0$, which then allows us to make better use of the shortcut-free condition. We choose $\varepsilon>0$ such that $$\tilde{\delta}:=\delta-\varepsilon>\frac\gamma{1-\gamma}.$$ 
This is equivalent to $\gamma<\frac{\tilde{\delta}}{\tilde{\delta}+1}$. As $\rho$ is regularly varying 
and bounded there exists $A>1$ such that 
	\[\rho(x)\leq A x^{-\tilde{\delta}} \quad \mbox{ for all $x>0$,} \]
by the Potter bound \cite[Theorem 1.5.6]{Bingham1987}. We define 
	\[\tilde{\rho}(x)=\mathbbm{1}_{[0,(pA)^{1/{\tilde\delta}}]}(x)+pA x^{-\tilde{\delta}}\mathbbm{1}_{((pA)^{1/{\tilde\delta}},\infty)}(x).\] 
{Note that $p$ enters the definition of 
$\tilde{\rho}$
at two places. Namely, it determines the range where edges are put deterministically and 
also scales the profile function.}
We now choose $\tilde{\rho}$ as a profile function together with the preferential attachment kernel \eqref{PAKernel} and construct $\mathcal{G}_{\tilde{\varphi}}(\X_0,\mathcal{U}_0)$ where 
	\[\tilde{\varphi}((x,t),(y,s))= 
	\tilde{\rho}\big({g^{\text{pa}}(t,s)}|x-y|^d\big).\]
In other words, we connect two given vertices $(x,t)$ and $(y,s)$ with probability
	\begin{align*}
		\begin{cases} 		
 			1, & \text{ if }|x-y|^d\leq (pA)^{\frac1{\tilde\delta}}{\gp(t,s)^{-1}} \\
 			p A\, \big({\gp(t,s)}|x-y|^d\big)^{-\tilde{\delta}}, & \text{ otherwise}.
		\end{cases}
	\end{align*} 			
Note that  in general  \(\tilde\rho\) does not satisfy the normalization condition \eqref{IntegrabilityCond}. However, $\tilde{\rho}$ is still integrable and therefore the resulting graph $\mathcal{G}_{\tilde{\varphi}}(\X_0,\mathcal{U}_0)$ is still locally finite with unchanged power law and shows qualitatively the same behaviour. 
Since 
$p\rho\leq \tilde{\rho}$, it follows by a simple coupling argument that
	\[\theta(p)\leq \P_0\{\0\leftrightarrow\infty \text{ in }\mathcal{G}_{\tilde{\varphi}}(\X_0, \mathcal{U}_0)\}.\]
Due to the above it is no loss of generality to consider the unpercolated graph $\G$, resp.~$\G_0$,
where the profile function
 $\rho$ is of the form
	\begin{equation}
		\rho(x) = 1 \wedge (pAx^{-\delta}),\label{defRho}
	\end{equation}
which is what we do from now on. Note that we can no longer assume that~\eqref{IntegrabilityCond} holds, 
instead we have 
	\begin{align}\label{def_irho}
	I_{\rho} & :=\int_{\R^d} \rho(|x|^d) \,  \d x =  (pA)^{1/\delta} \big(J(d)\tfrac{\delta}{d(\delta-1)}\big)
	\end{align}
where $J(d)=\prod_{j=0}^{d-2}\int_0^\pi \sin^j(\alpha_j)\d \alpha_j$ is the Jacobian of the $d$-dimensional sphere coordinates. We look at the probability that a shortcut-free path $P=((x_1,t_1),(x_2,t_2),\dots)$ exists in $\G$. 
By choice of $\rho$, such a path satisfies
$$|x_i-x_j|^d> (pA)^{\frac1{\delta}} {\gp(t_i,t_j)^{-1}},
\quad \mbox{ for all $|i-j|\geq 2$.}$$
\vspace{-8mm}

\paragraph{Strategy of the proof}
{ To build a long path, one needs to use old vertices. Every path is divided into a skeleton, which encodes how it moves to increasingly old vertices, and subpaths
connecting consecutive points of the skeleton by any number of younger vertices, which we call \emph{connectors}. We encode a characteristic feature of such a subpath by an unlabelled  binary tree using the local maxima construction. We show that whenever $\gamma< \delta/(\delta+1)$ the expected number of shortcut-free subpaths with a given tree  
of size $k$ is bounded by $(K I_{\rho})^k$ times the probability that the two extremal vertices are connected by an edge, for some constant $K>1$. Combining this estimate with the BK-inequality allows us to bound the probability of existence of a path with a given skeleton in terms of the probability that this skeleton is a path. The probability of existence of paths of the latter type can be estimated by a truncated first moment method with the truncation applied to the birth time of the oldest point on the skeleton. We therefore obtain that the probability of existence of a shortcut-free path of length $n$ starting in $\0$ is bounded from above by $(K I_\rho)^n$  and hence \[\theta(p)\leq\lim_{n\to\infty}(K I_\rho)^n=0\]
for $p>0$ small enough to ensure $I_\rho<1/K$.} 
	
\paragraph{Connecting two old vertices}
Let \(P\) be a path of length \(k\) that can be reduced to a skeleton with two vertices \(\x\) and~\(\y\). Let \(\y_0,\dots,\y_k\) be the vertices of \(P\), ordered by age from oldest to youngest. We assume without loss of generality that \(\x\) is younger than \(\y\) and therefore \(\x=\y_1\) and \(\y=\y_0\). We denote by $\mathscr{T}_{k-1}$ the set of all binary 
trees\footnote{Here, a binary tree is a rooted tree in which every vertex can have either (i) no child, (ii) a left child (iii) a right child, or (iv) a left and a right child.} 
with fixed vertex set \(\{\y_2,\dots,\y_k\}\)
such that every child has birth time greater than its parent.  With the path \(P\)
we associate a tree in $\mathscr{T}_{k-1}$ as follows, see Figure~\ref{FigPathToTree}.
\begin{description}
	\item[Step one:]  $\y_2$ is the root of the tree. 
\item[Step two:]  Suppose the tree with vertices $\y_2,\dots,\y_{i-1}$ is constructed. Attach
$\y_i$ {as a new leaf of} the tree.  To find {the place to attach the leaf} start at  the root and branch at every vertex to the left if
the path $P$ visits $\y_i$ before the vertex and to the right otherwise. If this means going to a place where 
there is no vertex, we attach $\y_i$ there. We continue like this until all $\y_2,\ldots,\y_k$  are attached.
\end{description}


\begin{figure}
\begin{center}
	\begin{minipage}{0.4\textwidth}
	\begin{tikzpicture}[scale=0.7, every node/.style={scale=0.7}]
		\draw[->] (-1,-0.5) -- (-1, 8)
			node[left] {$t$};
    	\node (A) at (0,1)[circle, fill=gray, label={$\y_1$}] {};
    	\node (B) at (2,3)[circle, draw, label={$\y_3$}] {};
    	\node (C) at (7,2) [circle, draw, label={$\y_2$}] {};
    	\node (D) at (8.5,0)[circle, fill=gray, label = $\y_0$] {};
    	\node (E) at (3,4.5)[circle, draw, label={$\y_4$}]{};
    	\node (F) at (5.5, 7)[circle, fill=black, label=$\y_{6}$]{};
    	\node (G) at (1,6)[circle, draw, label={$\y_5$}]{};
		\draw[gray] (A) to (G);
		\draw[gray](G) to (B);
		\draw[gray] (B) to (E);
		\draw[gray] (C) to (D);
		\draw[gray] (E) to (F);
		\draw[gray] (F) to (C);
	\end{tikzpicture}
	\end{minipage}
	\hspace{0.1\textwidth}
	\begin{minipage}{0.45\textwidth}
	\begin{tikzpicture}[scale=0.7, every node/.style={scale=0.7}]
		\node(C) at (0,0)[circle, draw, label={$\y_{2}$}]{};
		\node(B) at (-4,2)[circle, draw, label={$\y_{3}$}]{};
		\node(E) at (-2,4)[circle, draw, label={$\y_{4}$}]{};
		\node(F) at (0,6)[circle, fill=black, label=$\y_{6}$]{};
		\node(G) at (-6,4)[circle, draw, label={$\y_{5}$}]{};
		\draw (C) to (B);
		\draw (B) to (E);
		\draw[thick](E) to (F);
		\draw (B) to (G);
		\draw[dashed] (G) to (-7,5);
		\draw[dashed] (G) to (-5,5);
		\draw[dashed] (E) to (-3,5);
		\draw[dashed] (C) to (1,1);
	\end{tikzpicture}
	\end{minipage}
	\caption{On the left the path $P$ where the $t$-axis denotes the vertices' birth times. The vertices $\y_1$ and $\y_0$, which will not appear in the tree, are in grey. We insert the vertex $\y_6$ at the end of the branch that
	goes left at $\y_2$, right at $\y_3$, and right at $\y_4$.}
\label{FigPathToTree}
\end{center}
\end{figure}
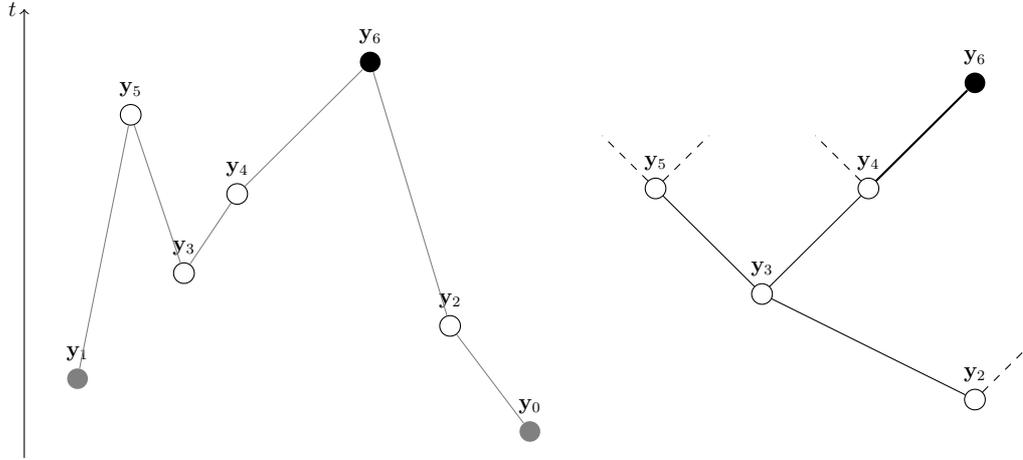

Next, we explain how to construct a path $P$ connecting $\x$ and $\y$ when $T\in\mathscr{T}_{k-1}$ is given, see Figure \ref{FigTreeToPath}. Here, given a path $(v_i)_{i=1}^n$ and any subpath $(v_{j-1},v_j,v_{j+1})$, we call $v_{j-1}$ the \emph{preceding vertex} of $v_j$ and $v_{j+1}$ the \emph{subsequent vertex} of $v_j$.
We explore $T$ using depth-first search and add the vertex currently being explored to the path. Let $P=(\x,\y)$ and let $\u$ be the root of $T$. We define $L=(\u)$ to be the list of vertices to be explored next (in the order as they are in \(L\)). We proceed as follows.
\begin{description}
	\item[Step one:] We insert $\u$ into $P$ as a local maximum between $\x,\y$. As a result $P=(\x,\u,\y)$. 
We remove $\u$ from $L$ and if $\u$ has children in $T$, we add them to $L$, ordered from left to right.

\item[Step two:] While $L$ is not empty, we do the following:
\begin{enumerate}
	\item We take the first vertex in $L$, denote it by $\v$ and remove it from $L$.
	\item If $\v$ has children in $T$, we insert them at the beginning of $L$, ordered from left to right. Having done that, we consider \(\v\) explored.
	\item Let $\w$ be the parent of $\v$ in $T$ and $\{\z_1,\w\}$, $\{\w,\z_2\}$ its incident edges in $P$, where $\z_1$ is the preceding vertex of $\w$ in $P$ and $\z_2$ the subsequent one. If $\v$ is the left child of $\w$, we insert $\v$ as a local maximum between $\z_1$ and $\w$ in $P$ by adding it to the path and replacing the edge $\{\z_1,\w\}$ in $P$ by the two edges $\{\z_1,\v\}$ and $\{\v,\w\}$.
	If $\v$ is a right child, we insert $\v$ as a local maximum between $\w$ and $\z_2$ in an analogous way.
\end{enumerate}
\end{description}
\vspace{-2mm}
\pagebreak[3]

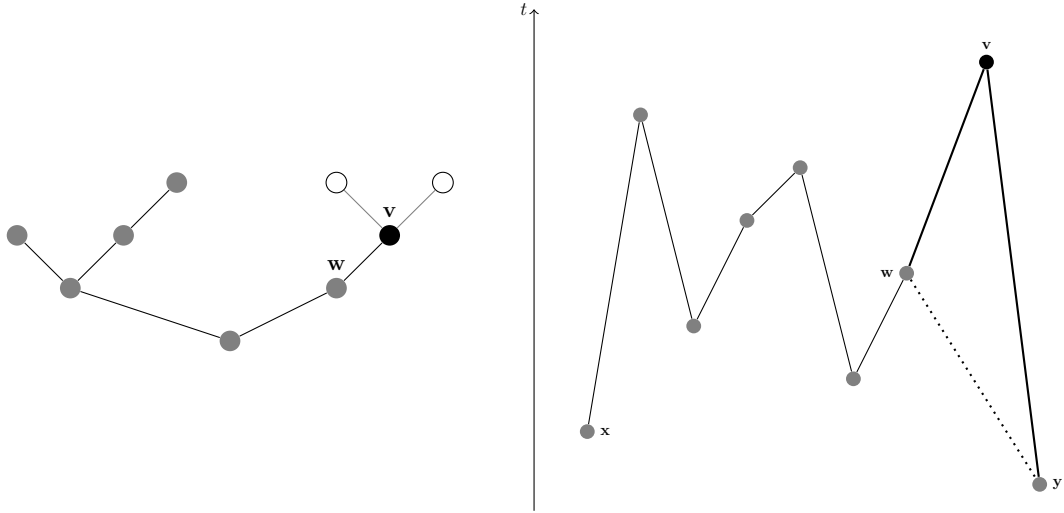
\begin{figure}[]
\begin{center}
	\begin{minipage}{0.4\textwidth}
	\begin{tikzpicture}[scale=0.7, every node/.style={scale=0.7}]
		\node(A) at (0,0)[circle, fill = gray, label={}]{};
		\node(B) at (-3,1)[circle, fill = gray, label={}]{};
		\node(C) at (-2,2)[circle, fill = gray, label={}]{};
		\node(D) at (-1,3)[circle, fill = gray, label={}]{};
		\node(E) at (-4,2)[circle, fill= gray, label={}]{};
		\node(F) at (2,1)[circle,  fill =gray , label = $\w$]{};
		\node(G)at (3, 2)[circle, fill=black, label=$\v$]{};
		\node(H)at (2,3)[circle, draw]{};
		\node(I)at (4,3)[circle, draw]{};
		\draw (A) to (B);
		\draw (B) to (E);
		\draw (B) to (C);
		\draw (C) to (D);
		\draw (A) to (F);
		\draw[] (F) to (G);
		\draw[gray] (G) to (H);
		\draw[gray] (G) to (I);
	\end{tikzpicture}
	\end{minipage}
	\hspace{0.02\textwidth}
	\begin{minipage}{0.5\textwidth}
	\begin{tikzpicture}[scale=0.7, every node/.style={scale=0.5}]
		\draw[->] (-1,-1.5) -- (-1, 8)
			node[left,scale=1.2] {$t$};
		\node(X) at (0,0)[circle, fill = gray, label={right:$\x$}]{};
		\node(Y) at (8.5,-1)[circle, fill = gray, label={right:$\y$}]{};
		\node(E) at (1, 6)[circle, fill=gray, label={}]{};
		\node(B) at (2,2)[circle, fill=gray, label ={}]{};
		\node(C) at (3,4)[circle, fill=gray, label ={}]{};
		\node(D) at (4,5)[circle, fill=gray, label={}]{};
		\node(A) at (5,1)[circle, fill=gray, label={}]{};
		\node(F) at (6,3)[circle, fill= gray, label={left:$\w$}]{};
		\node(G) at (7.5,7)[circle, fill=black, label=$\v$]{};
		\draw (X) to (E);
		\draw (E) to (B);
		\draw (B) to (C);
		\draw (C) to (D);
		\draw (D) to (A);
		\draw[] (A) to (F);
		\draw[thick, dotted] (F) to (Y);
		\draw[thick] (F) to (G);
		\draw[thick] (G) to (Y);
	\end{tikzpicture}
	\end{minipage}
	\caption{On the left the binary tree $T$. The grey vertices are already explored by depth-first search. The black vertex $\v$ is the vertex currently being explored. The white vertices have not been discovered yet. On the right, the path $P$ corresponding to the already explored tree. The $t$-axis denotes the vertices' birth times. Start and end vertex, $\x$ and $\y$, do not appear in the tree. 
	Since $\v$ is the right child of $\w$, we insert $\v$ as a local maximum between $\w$ and $\y$ in the path $P$.}
\label{FigTreeToPath}
\end{center}
\end{figure}

\bigskip
It is clear that for given $\y_0,\dots,\y_k$  the two procedures establish a bijection between the paths with vertices  $\y_0,\dots,\y_k$  that can be reduced to a skeleton with two vertices \(\y_0\) and \(\y_1\) on the one hand, and the trees $T\in\mathscr{T}_{k-1}$ on the other hand. {Removing the labels from a tree in $\mathscr{T}_{k}$ yields a binary tree, which encodes important structural information about the path}.

{The following lemma shows that, if $\gamma<\delta/(\delta+1)$, the probability of two vertices being connected through a single connector is bounded 
by a small multiple of the probability that there exists a direct edge between them. }
\pagebreak[3]

{
For two given vertices $\x$ and $\y$, we denote by \smash{$\{\x\xleftrightarrow[\x,\y]{2}\y\}$} the event that $\x$ and $\y$ are connected by a path of length two where the connector is younger than both of them.\medskip}
 
 \pagebreak[3]
 
\begin{lemma}\label{LemTwoConnection}
	Let $\gamma\in(0,\frac{\delta}{\delta+1})$. Let $\x=(x,t)$ and $\y=(y,s)$ be two given vertices satisfying $|x-y|^d\geq (pA)^{1/\delta} {\gp(t,s)^{-1}}$. Then
		\[\P^p_{\x,\y}\{\x\xleftrightarrow[\x,\y]{2}\y\}\leq \int\limits_{\R^d\times ((t\vee s),1]}\d \mathbf{z} \ \P^p_{\x,\z}\{\x\sim\mathbf{z}\}\P^p_{\y,\z}\{\mathbf{z}\sim\y\}\leq I_{\rho} \, C_1\P^p_{\x,\y}\{\x\sim\y\},\]
	where $C_1= \frac{\beta 2^{d\delta+1}}{\delta(1-\gamma)-\gamma}$.
\end{lemma}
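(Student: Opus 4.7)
The plan is to establish the two inequalities in turn. For the first, I would condition on the event that $\x$ and $\y$ are vertices and apply Mecke's formula to the remaining Poisson process of potential connectors with birth time strictly larger than $t\vee s$. The event $\{\x\xleftrightarrow[\x,\y]{2}\y\}$ is implied by the existence of at least one such vertex $\z\in\X$ with $\x\sim\z$ and $\z\sim\y$, so a union bound gives an upper bound by the expected number of such connectors. Since, conditional on the vertex set, edges are drawn independently, this expected number factorises exactly as in the stated first integral.

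For the second inequality I would assume without loss of generality $t\le s$, so that $\gp(t,s)=\beta^{-1}s^{1-\gamma}t^{\gamma}$, $\gp(t,u)=\beta^{-1}u^{1-\gamma}t^{\gamma}$ and $\gp(s,u)=\beta^{-1}u^{1-\gamma}s^{\gamma}$ for every $u>s$. The triangle inequality ensures $\max(|x-x_\z|,|y-x_\z|)\ge |x-y|/2$ for all $x_\z\in\R^d$, so I would split the spatial integral into the region where $|y-x_\z|\ge |x-y|/2$ and the symmetric one. On the first region, monotonicity of $\rho$ yields $\rho(\gp(s,u)|y-x_\z|^d)\le\rho(\gp(s,u)(|x-y|/2)^d)$, which by $\rho(r)\le pAr^{-\delta}$ is at most $pA\,2^{d\delta}(\gp(s,u))^{-\delta}|x-y|^{-d\delta}$; the remaining factor $\rho(\gp(t,u)|x-x_\z|^d)$ integrates over $x_\z$ to $I_{\rho}/\gp(t,u)$ by a scaling change of variables, with $I_\rho$ as in \eqref{def_irho}. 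The symmetric region yields the analogous term with $t$ and $s$ swapped.

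The integrand in $u$ is then proportional to $u^{-(1-\gamma)(\delta+1)}$, and the hypothesis $\gamma<\delta/(\delta+1)$ is precisely the condition $(1-\gamma)(\delta+1)>1$, giving $\int_s^1 u^{-(1-\gamma)(\delta+1)}\,\d u\le s^{\gamma(\delta+1)-\delta}/(\delta(1-\gamma)-\gamma)$. Multiplying this with the two surviving spatial factors produces two terms proportional to $s^{\gamma-\delta}t^{-\gamma}$ and $s^{-\delta(1-\gamma)}t^{-\gamma\delta}$, both times $|x-y|^{-d\delta}$. The inequality $s^{\gamma-\delta}t^{-\gamma}\le s^{-\delta(1-\gamma)}t^{-\gamma\delta}$ reduces to $(s/t)^{\gamma(\delta-1)}\ge 1$, which holds since $s\ge t$ and $\gamma(\delta-1)>0$. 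The distance hypothesis $|x-y|^d\ge (pA)^{1/\delta}\gp(t,s)^{-1}$ forces $\rho(\gp(t,s)|x-y|^d)=pA\beta^{\delta}s^{-\delta(1-\gamma)}t^{-\gamma\delta}|x-y|^{-d\delta}$, and collecting constants yields exactly the factor $I_{\rho}C_1$ with $C_1=\beta\,2^{d\delta+1}/(\delta(1-\gamma)-\gamma)$.

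The main obstacle is the algebraic bookkeeping: one must check that after the $u$-integration the exponents of $s$ and $t$ really line up with those of $\gp(t,s)^{-\delta}$, and in particular that the ``bad'' term $s^{\gamma-\delta}t^{-\gamma}$ is dominated by the ``good'' term $s^{-\delta(1-\gamma)}t^{-\gamma\delta}$. The critical value $\gamma=\delta/(\delta+1)$ emerges naturally here as the point where the $u$-integral ceases to converge near $u=0$, so that the argument, and indeed the conclusion of the lemma, would break down.
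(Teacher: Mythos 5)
Your proposal is correct and follows essentially the same route as the paper: bound the two-connector probability by the expected number of common younger neighbours, split the spatial integral via the triangle inequality and monotonicity of $\rho$, use the polynomial bound $\rho(r)\le pAr^{-\delta}$ together with the $u$-integral (convergent precisely because $\gamma<\delta/(\delta+1)$), and compare with the direct connection probability fixed by the distance hypothesis. The only difference is cosmetic: you make explicit the domination $s^{\gamma-\delta}t^{-\gamma}\le s^{-\delta(1-\gamma)}t^{-\gamma\delta}$ of the ``bad'' term, which the paper subsumes under ``a similar calculation yields the same bound.''
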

\begin{proof}
Without loss of generality let $t>s$ {in which case $\gp(t,s)=\beta^{-1}s^\gamma t^{1-\gamma}$.} Recall that 
\smash{$\{\x\xleftrightarrow[\x,\y]{2}\y\}$} is the event that $\x$ and $\y$ share a common neighbour that is born after both of them. Such neighbours form a Poisson point process on $\R^d\times(t,1]$ with intensity measure
	\[\rho(\beta^{-1}t^{\gamma}u^{1-\gamma}|x-z|^d)\rho(\beta^{-1}s^\gamma u^{1-\gamma}|z-y|^d)\, \d z \, \d u,\]
see \cite{GracarEtAl2019}, from which the first inequality follows. For the second inequality, we have
 \begin{align*}
 	& \int_t^1 \d u\int_{\R^d}\d z \ \rho(\beta^{-1}t^\gamma u^{1-\gamma}|x-z|^d)\rho(\beta^{-1}s^{\gamma}u^{1-\gamma}|z-y|^d) \\
 	& \leq \int_t^1\d u \Big[\int_{\R^d}\d z \ \rho(\beta^{-1}t^\gamma u^{1-\gamma}|x-z|^d)\rho\left((2^d\beta)^{-1}s^{\gamma}u^{1-\gamma}|x-y|^d\right) \\
 	& \qquad + \int_{\R^d}\d z \ \rho\left((2^d\beta)^{-1}t^\gamma u^{1-\gamma}|x-y|^d\right)\rho(\beta^{-1}s^{\gamma}u^{1-\gamma}|z-y|^d)\Big].
 \end{align*}%
Here, the inequality holds as for all $z\in\mathbb{R}^d$ either $|x-z|\geq \frac{1}{2}|x-y|$ or $|y-z|\geq \frac{1}{2}|x-y|$, 
and $\rho$ is non-increasing. For the first integral, a change of variables leads to
	\begin{align*}
	\int_t^1\d u \ \beta t^{-\gamma}u^{\gamma-1}\rho\left((2^d\beta)^{-1}s^{\gamma}u^{1-\gamma}|x-y|^d\right)I_{\rho}.
	\end{align*}
As $\rho(x)=1\wedge (pAx^{-\delta})$ 
this can be further bound by 
	\begin{align*}
	 pA 2^{d\delta}\beta^{1+\delta}I_{\rho} \int_t^1\d u\ s^{-\gamma\delta}t^{-\gamma}
	 & |x-y|^{-d\delta}u^{-\delta(1-\gamma)+\gamma-1}\\
	 & \leq pA 2^{d\delta} I_{\rho} \frac{\beta^{\delta+1}}{\delta(1-\gamma)-\gamma}(s^\gamma t^{1-\gamma}|x-y|^d)^{-\delta}
	\end{align*}
using that $\gamma<\delta/(\delta+1)$. A similar calculation for the second integral yields the same bound and as
 $|x-y|^d>(pA)^{1/\delta}\beta s^{-\gamma}t^{\gamma-1}$ implies 
 $pA(\beta^{-1}s^\gamma t^{1-\gamma}|x-y|^d)^{-\delta}\leq 1,$
 and therefore
	\[\P^p_{\x,\y}\{\x\sim\y\}=pA\big(\beta^{-1}s^\gamma t^{1-\gamma}|x-y|^d\big)^{-\delta},\]
 which proves the claim.
\end{proof} \medskip
{We now extend this result to bound the probability that the two given vertices $\x$ and $\y$ are connected through $k-1$ connectors.} That is, $\x$ and $\y$ are connected by a path of length $k$ and $\x$ and $\y$ are the two oldest vertices within the path. We denote this event by \smash{$\{\x\xleftrightarrow[\x,\y]{k}\y\}$}. 
\bigskip

\begin{lemma}\label{LemkConnection}
Let 
$\gamma\in(0,\frac{\delta}{\delta+1})$ and $\x=(x,t), \y=(y,s)$ be two Poisson points satisfying $|x-y|^d>(pA)^{1/\delta}{\gp(t,s)^{-1}}$. Then, for all $k\in\mathbb{N}$, we have
	\begin{equation}
		\P^p_{\x,\y}\{\x\xleftrightarrow[\x,\y]{k}\y\} \leq (I_\rho C_2)^{k-1}\P^p_{\x,\y}\{\x\sim\y\}, \label{EqkConnection}
	\end{equation}
where $C_2=\frac{2^{d\delta+3}\beta}{\delta(1-\gamma)-\gamma}$.
\end{lemma}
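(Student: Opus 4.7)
The plan is to prove this by strong induction on $k$, with the base case $k = 2$ given by Lemma~\ref{LemTwoConnection} (noting $C_1 \leq C_2$). For the inductive step, I exploit the bijection established above between paths of length $k$ from $\x$ to $\y$ (with $\x, \y$ as the two oldest vertices) and unlabelled binary trees with $k-1$ nodes, via the local maxima construction. For such a tree shape $T$, let $N(T; \x, \y)$ denote the expected number of paths of length $k$ from $\x$ to $\y$ (with intermediates younger than both) whose associated tree is $T$. Splitting the path at its oldest intermediate vertex---which corresponds to the root $\y_2$ of $T$, inserted first between $\x$ and $\y$---Mecke's formula yields the recursion
\[
N(T; \x, \y) = \int_{\R^d \times (t \vee s,\, 1]} N(T_L; \x, \y_2)\,N(T_R; \y_2, \y)\,\d\y_2,
\]
where $T_L, T_R$ are the left and right subtrees at the root, with base case $N(\emptyset; \u, \v) = \P^p_{\u,\v}\{\u \sim \v\}$.

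The inductive claim is that $N(T; \x, \y) \leq (I_\rho C_1)^{|T|}\P^p_{\x,\y}\{\x \sim \y\}$ for every shape $T$. Using the inductive hypothesis on $T_L$ and $T_R$ followed by Lemma~\ref{LemTwoConnection} applied to the resulting integral,
\[
N(T; \x, \y) \leq (I_\rho C_1)^{|T_L|+|T_R|}\int \P\{\x \sim \y_2\}\P\{\y_2 \sim \y\}\,\d\y_2 \leq (I_\rho C_1)^{|T|}\P\{\x \sim \y\}.
\]
Summing over all unlabelled binary trees with $k-1$ nodes, of which there are at most the Catalan number $\mathrm{Cat}(k-1) \leq 4^{k-1}$, and using $C_2 = 4C_1$,
\[
\P^p_{\x,\y}\{\x \xleftrightarrow[\x,\y]{k} \y\} \leq \sum_{T} N(T; \x, \y) \leq 4^{k-1}(I_\rho C_1)^{k-1}\P\{\x \sim \y\} = (I_\rho C_2)^{k-1}\P\{\x \sim \y\},
\]
which is the asserted bound.

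The main technical obstacle is that the inductive step invokes Lemma~\ref{LemTwoConnection} on the pairs $(\x, \y_2)$ and $(\y_2, \y)$ arising in the recursion, which need not satisfy its distance-condition hypothesis $|x - x_2|^d \geq (pA)^{1/\delta}\gp(t, u)^{-1}$. I expect to resolve this by strengthening the inductive claim so that it holds without any distance condition, replacing $\P\{\x \sim \y\}$ on the right-hand side by the unbounded upper estimate $pA(\gp(t, s)|x - y|^d)^{-\delta}$ (which agrees with $\P\{\x \sim \y\}$ precisely when the distance condition holds, and bounds it always). This strengthened bound is closed under the recursion because the intermediate bound $\int \P\{\x \sim \y_2\}\P\{\y_2 \sim \y\}\d\y_2 \leq I_\rho C_1 \, pA(\gp(t,s)|x-y|^d)^{-\delta}$ is already proved inside Lemma~\ref{LemTwoConnection} without reference to the distance condition, and one identifies the estimate with $\P\{\x \sim \y\}$ only at the outermost level, where the condition is supplied by the hypothesis of the lemma.
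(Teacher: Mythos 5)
Your reduction of the count to unlabelled binary trees, the recursion at the root via Mecke's equation and edge-independence, the Catalan bound $4^{k-1}$ and the bookkeeping $C_2=4C_1$ all match the intended structure (the paper inducts instead by inserting a leaf into the tree, but that is a cosmetic difference). The genuine problem is the repair you propose for the distance-condition issue: the strengthened claim $N(T;\x,\y)\leq (I_\rho C_1)^{|T|}\,pA\bigl(\gp(t,s)|x-y|^d\bigr)^{-\delta}$ does \emph{not} close under your recursion. After substituting it for $N(T_L;\x,\y_2)$ and $N(T_R;\y_2,\y)$, the integrand is a product of two pure power-law kernels, and
\begin{equation*}
\int_{\R^d} pA\bigl(\gp(t,u)|x-z|^d\bigr)^{-\delta}\, pA\bigl(\gp(u,s)|z-y|^d\bigr)^{-\delta}\,\d z \;=\;\infty ,
\end{equation*}
since $|x-z|^{-d\delta}$ with $\delta>1$ is not locally integrable in $\R^d$. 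The inequality you quote ``from inside Lemma~\ref{LemTwoConnection}'' takes the \emph{genuine} connection probabilities $\rho(\cdot)\leq 1$ as input --- the boundedness and integrability of $\rho$ is exactly what produces the factor $I_\rho$ in the change of variables --- and only its \emph{output} is the unbounded estimate $pA(\cdot)^{-\delta}$; it does not self-compose. Consequently your displayed step $N(T;\x,\y)\leq (I_\rho C_1)^{|T_L|+|T_R|}\int \P\{\x\sim\y_2\}\P\{\y_2\sim\y\}\,\d\y_2$ is unjustified: replacing $N(T_L;\x,\y_2)$ by $(I_\rho C_1)^{|T_L|}\P^p_{\x,\y_2}\{\x\sim\y_2\}$ is precisely the unstrengthened hypothesis, which requires the distance condition on the pair $(\x,\y_2)$.

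The missing ingredient --- and the paper's actual mechanism --- is shortcut-freeness. The paths being counted are shortcut-free paths in the surgered graph with $\rho(x)=1\wedge(pAx^{-\delta})$, so any two vertices at path distance at least two automatically satisfy $|x-z|^d>(pA)^{1/\delta}\gp(t,u)^{-1}$: otherwise the edge between them would be present deterministically and would be a shortcut. The paper's induction inserts a new leaf and applies Lemma~\ref{LemTwoConnection} to the leaf's preceding and subsequent vertices in the path, whose distance condition is supplied exactly by this observation. Your root-splitting recursion can be repaired in the same way: restrict $N(T;\x,\y)$ to shortcut-free paths (subpaths of shortcut-free paths are shortcut-free, and ignoring the gluing constraint only increases the count), so that whenever $T_L\neq\emptyset$ the pair $(\x,\y_2)$ is non-adjacent on the path and the plain induction hypothesis applies, while for $T_L=\emptyset$ one has $N(\emptyset;\x,\y_2)=\P^p_{\x,\y_2}\{\x\sim\y_2\}$ directly; the final integration over $\y_2$ is then exactly Lemma~\ref{LemTwoConnection}, whose hypothesis on $(\x,\y)$ is given in the statement of the present lemma. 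With that substitution the remainder of your argument goes through and reproduces the paper's constants.
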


{
\begin{proof}
For $k=1$ there is nothing to show, so we assume $k\geq 2$.  
If $T$ is an unlabelled binary tree with $k-1$ vertices we denote by $X(T)$ the number of paths connecting 
$\x$ and $\y$ through $k-1$ connectors, which are associated with a labelling of $T$.  
Taking the union over all (unlabelled) binary trees on $k-1$ vertices we get	
\begin{align*}
\P^p_{\x,\y}\{\x\xleftrightarrow[\x,\y]{k}\y\} \leq  \sum_{\heap{T \text{ binary tree}}{\text{on $k-1$ vertices}}} \E^p_{\x,\y}\big[ X(T) \big],	
\end{align*}
and as the number of binary trees on $k-1$ vertices is bounded from above by\footnote{The number of binary rooted trees of size $n$ is given by the Catalan numbers $(2n)!/ (n! (n+1)!)$.} $4^{k-1}$ it suffices to show 
$$ \E^p_{\x,\y}\big[ X(T) \big] \leq (I_\rho C_1)^{k-1}\P^p_{\x,\y}\{\x\sim\y\},$$
for all binary trees $T$ with $k-1$ vertices. We show this by induction on $k$ starting with the case $k=2$, when $T$ consist of just the root,
which is shown in Lemma~\ref{LemTwoConnection}. For the induction step we fix an unlabelled binary tree $T$ with $k-1$ vertices and insert a new leaf.
Denote the new tree with $k$ vertices by $T'$.
We identify the two vertices in the tree, which correspond to the preceding and subsequent vertex of the new leaf in any path associated with $T'$ as follows:
\begin{itemize}
\item If the new leaf is a left child, its subsequent vertex in the path is its parent, and its preceding vertex is determined by following its ancestral line backwards along the tree until we find 
a vertex which has a right child on the ancestral line. If there is no such vertex its preceding vertex is $\x$.
\item  If the new leaf is a right child, its preceding vertex in the path is its parent, and its subsequent vertex is determined by following its ancestral line backwards along the tree until we find 
a vertex which has a left child on the ancestral line. If there is no such vertex its subsequent vertex is $\y$.
\end{itemize}
From the construction of the tree we make the following two observations  if a path is associated with $T'$,
\begin{itemize}
\item[(i)] the new leaf is younger than its parent and the path contains two sequential edges, 
one connecting the preceding vertex to the new leaf, and one connecting the new leaf to its  subsequent vertex,
\item[(ii)] if the preceding and subsequent vertex of the new leaf are connected by an edge, then the path using that edge instead of the
the two edges adjacent to the new leaf is associated with $T$.
\end{itemize}
We call a labelling of $T$ by points of the Poisson process \emph{almost complete} if it becomes the labelling associated with a path when 
the preceding and subsequent vertex of the new leaf are connected by an edge. Hence (ii) can be restated saying that the 
labelling of $T$ obtained by association of a path with $T'$ is almost complete. \smallskip

Denoting the labels of the preceding and subsequent vertices of the new leaf  by $\x_\ell=(x_\ell, t_\ell)$ resp.\  $\x_r=(x_r, t_r)$ we get using (i) that
\begin{align*}
\E^p_{\x,\y} & \big[ X(T') \big]  \\
& = \E^p_{\x,\y}\big[ \sharp \big\{ \text{paths } \x\leftrightarrow \x_\ell \sim \x_{\rm new}\!\sim  \x_r\leftrightarrow \y \text{ associated with }T' \big\} \big]\\
& \leq   \E^p_{\x,\y}\Big[ \sum_{\heap{\text{almost complete}}{\text{labellings of $T$}}}\int_{t_\ell\vee t_r}^1 du \int_{\R^d} dz 
\varphi((x_\ell,t_\ell),(z,u))\varphi((z,u),(x_r,t_r))\Big].
\end{align*}
As the paths associated to $T'$ are shortcut-free we have $|x_\ell-x_r|^d>(pA)^{1/\delta}\beta g^{\text{pa}}(t_\ell,t_r)^{-1}$ 
and hence Lemma~\ref{LemTwoConnection} ensures that this is bounded by
\begin{align*}
I_{\rho} \, C_1 \, & \E^p_{\x,\y}\Big[ \sum_{\heap{\text{almost complete}}{\text{labelling of $T$}}} \P^p_{\x_\ell,\x_r}\{\x_\ell\sim\x_r\} \Big] \\
& \leq I_{\rho} \, C_1 \,  \E^p_{\x,\y}\big[ X(T) \big]  \leq (I_\rho C_1)^{k}\P^p_{\x,\y}\{\x\sim\y\},
\end{align*}
using (ii) and the induction hypothesis.
\end{proof}}

\paragraph{BK-inequality} We use a version of the famous van den Berg-Kesten (BK) inequality \cite{vdBerg1996} where the application to our setting is described in detail in \cite[pp. 10-13]{Heydenreich2019LaceEx}. {For a path with given skeleton, the BK-inequality allows us to focus on the individual subpaths between any two consecutive skeleton vertices instead of considering the whole path at once.} 

For given Poisson points $\x_0,\x_1,\dots, \x_m$, we write $$\big\{\x_0\xleftrightarrow[\x_0,\x_1,\dots,\x_m]{k}\x_m\big\}$$ for the event that $\x_0$ and $\x_m$ are connected by a path of length $k$, that has skeleton $\x_0,\x_1,\dots,\x_m$. Recall that the length of a path is the number of edges on the path. {This definition is consistent with the previously introduced notation \smash{$\{\x\xleftrightarrow[\x,\y]{2}\y\}$} and  \smash{$\{\x \xleftrightarrow[\x,\y]{k}\y\}$}. We further denote by 
 \smash{$\{\x \xleftrightarrow[]{k}\y\}$} the event that $\x$ and $\y$ are connected by~a path of length k.}

Conditioned on the event that the three distinct points $\x_1,\x_2,\x_3$ are vertices of~$\G$, define $E$ to be the event that $\x_1$ is connected by a path { of length $n_1$} to $\x_2$ and $\x_2$ is connected by a path {of length $n_2$} to $\x_3$, where both paths only share $\x_2$ as a common vertex; we say that both paths \emph{occur disjointly}. 
We denote this disjoint occurrence by $\circ$ and write ${E=\{\x_1\xleftrightarrow[]{n_1}\x_2\}\circ\{\x_2\xleftrightarrow[]{n_2}\x_3\}}$. Furthermore, both events are \emph{increasing} in the following sense. Given any realization of the Poisson point process such that there is such a path between, say, $\x_1$ and $\x_2$, then there also exists such a path in any realization with additional vertices. Recall that $\P^p_{\x_1,\ldots,\x_n}$ denotes the law of $\G$ conditioned on $\x_1,\ldots,\x_n$ being vertices in $\X$. Then the BK-inequality from \cite[Theorem 2.1]{Heydenreich2019LaceEx} yields
	\begin{equation}
	\P^p_{\x_1,\x_2,\x_3}\left(\{\x_1\xleftrightarrow[]{n_1}\x_2\}\circ\{\x_2\xleftrightarrow[]{n_2}\x_3\}\right)\leq \P^p_{\x_1,\x_2}\{\x_1\xleftrightarrow[]{n_1}\x_2\}\P^p_{\x_2,\x_3}\{\x_2\xleftrightarrow[]{n_2}\x_3\}. \label{BKineq}
	\end{equation}
Next, let $S=(\x_0,\x_1,\dots,\x_m)$ be a given skeleton and recall that all paths we consider are self-avoiding. Then the event that the root $\0=\x_0$ starts a path of length $n$ that has skeleton $S$ can be written as 
	\[\{\x_0\xleftrightarrow[\x_0,\x_1,\dots,\x_m]{n}\x_m\}=\bigcup_{\substack{(n_1,\dots,n_m)\in\N^m: \\ n_1+\dots+n_m=n}}\{\x_0\xleftrightarrow[ \x_0,\x_1]{n_1}\x_1\}\circ\dots\circ\{\x_{m-1}\xleftrightarrow[ \x_{m-1},\x_m]{n_m}\x_m\}.\] 
Inductively, we derive as in~\eqref{BKineq} that
	\begin{equation}
		\P^p_{\x_0,\x_1,\dots,\x_m}\{\x_0\xleftrightarrow[\x_0,\x_1,\dots,\x_m]{n}\x_m\}\leq \sum_{ \substack{(n_1,\dots,n_m)\in\N^m: \\ n_1+\dots+n_m=n}}\prod_{j=1}^m\P^p_{\x_{j-1},\x_j}\{\x_{j-1}\xleftrightarrow[ \x_{j-1},\x_j]{n_j}\x_j\}. \label{BK}
	\end{equation}
	
\paragraph{Proof of the subcritical phase}
We now use the results of the previous paragraphs to bound the probability of a {shortcut-free} path of length $n$ existing by some exponential, thus showing Theorem \ref{ThmPercolationPhase}(a). To this end, we have to distinguish between regular and irregular paths. Let $S=(\x_0,\x_1,\dots,\x_m)$ be a skeleton of length~$m$. We say $S$ is \emph{regular} if its oldest vertex is born after time $2^{-m}$. We say $S$ is \emph{irregular} if its oldest vertex is born before time $2^{-m}$. Similarly, we say a path $P$ of finite length is regular if its underlying skeleton is regular and conversely, $P$ is irregular if its skeleton is irregular. Finally, let $P=(\v_0,\v_1,\dots)$ be an infinite path. We say $P$ is \emph{irregular} if for all $k\in\N$ there exists $n\geq k$ such that the path (of length~\(n\)) $(\v_0,\dots,\v_n)$ is irregular. An infinite path $P$ is \emph{regular} if it is not irregular. In other words, an infinite path is irregular if it has irregular subpaths of arbitrarily large lengths. We first show that almost surely any path is regular on a large enough scale, that is any irregular path becomes regular if it is extended by enough additional vertices. Therefore, $\{\0\leftrightarrow\infty\}$ equals the event that the root $\0$ starts an infinite path that is regular and we then show that no such path exists.

\begin{proof}[Proof of Theorem~\ref{ThmPercolationPhase}(a)] Observe that if an irregular path of length $n$ exists, then an irregular path of length $k\leq n$, whose end vertex is the oldest vertex of the path also exists. Let $A_\text{irreg}(k)$ be the event that $\0$ starts an irregular path of length $k$ where the end vertex is the oldest one. We will prove in the following lemma that $\P_0^p(A_\text{irreg}(k))\leq (C_3I_\rho)^k$ for some constant $C_3$. We then choose $p$ such that $I_\rho<C_3^{-1}$ and achieve 
	\[\sum_{k=1}^\infty\P_0^p(A_\text{irreg}(k))<\infty.\] 
Hence, Borel-Cantelli yields that almost surely any long enough path is  regular. \medskip

\begin{lemma}\label{LemIrregPath}
Let $\gamma\in[0,\frac{\delta}{\delta+1})$. Then, { for all $k\in \N$},
	\[\P_0^p (A_{\text{irreg}}(k))\leq (C_3 I_\rho)^k,\]
where {$C_3=2C_2=\frac{\beta 2^{d\delta+4}}{\delta(1-\gamma)-\gamma}$}. 
\end{lemma}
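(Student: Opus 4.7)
The plan is a first-moment computation parametrized by the skeleton of the path and the lengths of its connector subpaths. Since the end vertex is the oldest, the skeleton $(\x_0,\x_1,\ldots,\x_m)$ with $\x_0=\mathbf{0}$ has strictly decreasing birth times $t_0>t_1>\cdots>t_m$, irregularity is the constraint $t_m<2^{-m}$, and I write $n_j\ge 1$ for the length of the $j$-th connector subpath so that $n_1+\cdots+n_m=k$. Mecke's equation turns $\mathbb{P}^p_0(A_{\text{irreg}}(k))$ into a sum over $m\in\{1,\ldots,k\}$ and over the $\binom{k-1}{m-1}$ compositions $(n_1,\ldots,n_m)$ of $k$ into positive parts, integrated over admissible skeleton configurations $\x_1,\ldots,\x_m$, of the probability that a path of length $k$ with the prescribed skeleton exists.

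I would then apply the BK-inequality \eqref{BK} to factor this probability across the $m$ connectors and Lemma~\ref{LemkConnection} to each factor; the distance hypothesis of that lemma for $n_j\ge 2$ is automatic from the shortcut-free property of the path. Per connector this gives $(I_\rho C_2)^{n_j-1}\mathbb{P}^p_{\x_{j-1},\x_j}\{\x_{j-1}\sim\x_j\}$, and summing over compositions yields $\binom{k-1}{m-1}(I_\rho C_2)^{k-m}\prod_{j=1}^m \mathbb{P}^p_{\x_{j-1},\x_j}\{\x_{j-1}\sim\x_j\}$. Integrating $x_m,x_{m-1},\ldots,x_1$ iteratively, translation invariance gives
\begin{equation*}
\int_{\mathbb{R}^d}\rho\bigl(g^{\text{pa}}(t_{j-1},t_j)|x|^d\bigr)\,dx \;=\; I_\rho\,\beta\, t_{j-1}^{\gamma-1}t_j^{-\gamma}
\end{equation*}
(using $t_{j-1}>t_j$), so the full spatial integral telescopes to $I_\rho^m\beta^m\, t_0^{\gamma-1}t_m^{-\gamma}\prod_{j=1}^{m-1}t_j^{-1}$.

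The main obstacle is the remaining nested integral over the time simplex. I would handle it as follows. The substitution $u_j=-\ln t_j$ converts the integral over the intermediate times $t_1,\ldots,t_{m-1}$ into the $(m-1)$-simplex volume $(\ln(t_0/t_m))^{m-1}/(m-1)!$. After a further substitution $v=\ln(t_0/t_m)$ in the $t_0$-integral, crudely enlarging the $t_m$-range from $(0,2^{-m})$ to $(0,1)$, and swapping the order of integration, what remains is bounded by
\begin{equation*}
\int_0^\infty \frac{v^{m-1}}{(m-1)!}\, e^{-(1-\gamma)v}\,dv \;=\; (1-\gamma)^{-m}.
\end{equation*}
Collecting all factors and summing in $m$ by the binomial theorem yields
$\mathbb{P}^p_0(A_{\text{irreg}}(k))\le I_\rho^k C_2^k\,r(1+r)^{k-1}$
with $r:=\beta/((1-\gamma)C_2)=(\delta(1-\gamma)-\gamma)/((1-\gamma)2^{d\delta+3})$. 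The assumption $\gamma<\delta/(\delta+1)$ together with the elementary bound $\delta\le 2^{d\delta+3}$ gives $r\le 1$, hence $r(1+r)^{k-1}\le 2^{k-1}$ and so $\mathbb{P}^p_0(A_{\text{irreg}}(k))\le (2C_2 I_\rho)^k=(C_3 I_\rho)^k$, as required.
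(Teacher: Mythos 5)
Your argument is correct and follows essentially the same route as the paper: Mecke's equation, the BK-inequality, Lemma~\ref{LemkConnection} applied per connector (with the shortcut-free distance condition, or trivially when $n_j=1$), the spatial change of variables yielding $I_\rho^m\beta^m t_0^{\gamma-1}t_m^{-\gamma}\prod_{j=1}^{m-1}t_j^{-1}$, and the time integral bounded by $(1-\gamma)^{-m}$ before summing over $m$. The only cosmetic differences are that you evaluate the time-simplex integral directly via the logarithmic substitution instead of invoking Lemma~\ref{IntOutdegree}, and you package the final sum with the binomial theorem and the ratio $r=\beta/((1-\gamma)C_2)\le 1$ rather than the paper's bound $\beta^m(1-\gamma)^{-m}\le C_2^m$ followed by $\sum_m\binom{k-1}{m-1}\le 2^k$; both give the same constant $C_3=2C_2$.
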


\begin{proof}
A path of length \(k\) whose oldest vertex is also the end vertex has a skeleton whose vertices' birth times are decreasing. Thus, we again write $\0=\x_0=(x_0,t_0)$ and have by the Mecke equation as in the proof of Lemma~\ref{LemPercolationLeq} that
	\begin{align*}
		 \P_0^p & (A_\text{irreg}(k)) 
		 \leq \sum_{m=1}^k\mathbf{E}\bigg[\sum_{\substack{(x_1,t_1),\dots,(x_m,t_m)\in\X \\ t_0>t_1>\cdots> t_m \\ t_m<2^{-m}}}\P_{\X_0}^p\Big\{(x_0,t_0)\xleftrightarrow[(x_0,t_0),\dots,(x_m,t_m)]{k}(x_m,t_m)\Big\}\bigg] \\
		&= \sum_{m=1}^k \, \int\limits_{0}^1\d t_0 \int\limits_{\substack{(\R^d\times(0,1])^m \\ t_0>t_1>\cdots> t_m \\ t_m<2^{-m}}}\bigotimes_{j=1}^m \d(x_j,t_j)\P^p_{\x_0,\dots,\x_m}\Big\{(x_0,t_0)\xleftrightarrow[(x_0,t_0),\dots,(x_m,t_m)]{k}(x_m,t_m)\Big\},
	\end{align*}
where we have written $\x_j=(x_j,t_j)$ for $j=1,\dots,m$ as usual. Using the BK-Inequality~\eqref{BK} and Lemma~\ref{LemkConnection}, we get for the last probability, 
	\begin{align*}
	&\P^p_{\x_0,\dots,\x_m}\Big\{(x_0,t_0)\xleftrightarrow[(x_0,t_0),\dots,(x_m,t_m)]{k}(x_m,t_m)\Big\} \\
	 &\quad \leq \sum_{\substack{(n_1,\dots,n_m)\in\N^m: \\ n_1+\dots+n_m=n}} \prod_{j=1}^m \P^p_{\x_{j-1},\x_j}\{(x_{j-1},t_{j-1})\xleftrightarrow[\x_{j-1},\x_j]{n_j}(x_j,t_j)\} \\
	 &\quad \leq \sum_{\substack{(n_1,\dots,n_m)\in\N^m: \\ n_1+\dots+n_m=n}} (C_2 I_\rho)^{k-m}\prod_{j=1}^m\P^p_{\x_{j-1},\x_j}\{(x_{j-1},t_{j-1})\sim(x_j,t_j)\}\\
	 &\quad { = {k-1\choose m-1}(C_2 I_\rho)^{k-m}\prod_{j=1}^m\P^p_{\x_{j-1},\x_j}\{(x_{j-1},t_{j-1})\sim(x_j,t_j)\}}.
	\end{align*}
Here, we used that either the consecutive skeleton vertices $\x_{i-1}$ and $\x_i$ fulfil the minimum distance for shortcut-free paths or {$n_i=1$}. 
Therefore,
\begin{align}
	 & \P_0^p(A_\text{irreg}(k)) \label{hier} \\
	 &\leq \sum_{m=1}^k { k-1 \choose m-1}(C_2I_\rho)^{k-m} \notag \\
	 & \qquad\times \int_{0}^1\d t_0\int_{0}^{t_0}\d t_1\int_{\R^d}\d x_1\dots\int_{0}^{2^{-m}\wedge t_{m-1}} \d t_m\int_{\R^d}\d x_m \Big(\prod_{i=1}^m\rho(\beta^{-1}t_{i-1}^{1-\gamma}t_i^{\gamma}|x_{i-1}-x_i|^d)\Big) \notag \\
	 &\leq \sum_{m=1}^k { k-1 \choose m-1} C_2^{k-m} I_\rho^k \beta^{m}\int_{0}^1\d t_0\int_{0}^{t_0}\d t_1\dots \int_{0}^{2^{-m}\wedge t_{m-1}}\d t_m \ t_0^{\gamma-1}t_m^{-\gamma}\prod_{i=1}^{m-1} t_i^{-1} \notag\\
	 &\leq I_\rho^k \sum_{m=1}^k { k-1 \choose m-1} \beta^m C_2^{k-m} (1-\gamma)^{-m}  \leq (C_2I_\rho)^k{ \sum_{m=1}^k {k-1 \choose m-1} \leq (C_3 I_\rho)^k},\notag
\end{align} 
where the {third} inequality follows from Lemma~\ref{IntOutdegree}. 
\end{proof}
\pagebreak[3]

The previous lemma shows that for $I_\rho<C_3^{-1}$, it suffices to show that $\0$ does not start an infinite path that is regular in order to obtain $\theta(p)=0$. Let $A_\text{reg}(n)$ be the event that $\0$ starts a regular path of length $n$. \pagebreak[3]
	
\begin{lemma}\label{LemRegPath}
Let $\gamma\in[\frac{1}{2},\frac{\delta}{\delta+1})$. Then, {for 
all $n\in\N$}, we have
	\[\P_0^p(A_{\text{reg}}(n))\leq K(C_3 I_\rho)^n,\]
where $C_3=2C_2=\frac{\beta 2^{d\delta+4}}{\delta(1-\gamma)-\gamma}$ and $K$ is some constant.
\end{lemma}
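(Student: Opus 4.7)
The plan is to adapt the counting argument of Lemma~\ref{LemIrregPath} to regular skeletons of arbitrary shape. Summing over the skeleton length $m$, the index $k\in\{0,\dots,m\}$ of the oldest skeleton vertex, and the positions of the skeleton vertices (via Mecke's equation), and then applying the BK-inequality~\eqref{BK} together with Lemma~\ref{LemkConnection} to each of the $m$ connector segments of lengths $n_1,\dots,n_m$ summing to $n$, one obtains
\[\P_0^p(A_{\text{reg}}(n)) \leq \sum_{m=1}^n \sum_{k=0}^m \binom{n-1}{m-1}(C_2 I_\rho)^{n-m}(\beta I_\rho)^m \, J(k,m),\]
where the factor $(\beta I_\rho)^m$ comes from integrating out the spatial coordinates of the skeleton via~\eqref{def_irho}, and $J(k,m)$ is the iterated integral of $\prod_{j=1}^m(t_{j-1}\wedge t_j)^{-\gamma}(t_{j-1}\vee t_j)^{\gamma-1}$ over birth times $t_0 > t_1 > \cdots > t_k < t_{k+1} < \cdots < t_m$ in $(0,1]$ subject to the regularity constraint $t_k \geq 2^{-m}$.

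The central task is to estimate $J(k,m)$. Collecting exponents along the two monotone branches, the integrand factorises as $t_0^{\gamma-1} t_m^{\gamma-1} t_k^{-2\gamma} \prod_{j\notin\{0,k,m\}} t_j^{-1}$. I would treat the decreasing branch
\[F_k(t_k) := \int_{t_k < t_{k-1} < \cdots < t_0 < 1} t_0^{\gamma - 1}\prod_{j=1}^{k-1} t_j^{-1} \, dt_0 \cdots dt_{k-1}\]
by iteratively integrating the inverse powers, which produces $(\log(t_0/t_k))^{k-1}/(k-1)!$ in the innermost $k-1$ variables. The substitution $t_0 = t_k e^v$ followed by one integration by parts then yields
\[F_k(t_k) \leq \frac{(-\log t_k)^{k-1}}{\gamma(k-1)!},\]
with a symmetric estimate for the increasing branch. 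Under the regularity constraint $t_k \geq 2^{-m}$ we have $-\log t_k \leq m\log 2$, and a final substitution $u = -\log t_k$ bounds $\int_{2^{-m}}^1 t_k^{-2\gamma}(-\log t_k)^{m-2}\, dt_k$ by $2^{m(2\gamma-1)}(m\log 2)^{m-2}/(2\gamma-1)$ for $\gamma > 1/2$; the boundary $\gamma = 1/2$ introduces only a polynomial extra factor in $m$ and is handled analogously. The boundary cases $k\in\{0,m\}$, in which the skeleton itself is monotone, produce a smaller contribution treated in the same way.

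Summing over the position $k$ of the minimum uses the identity $\sum_{k=1}^{m-1} 1/((k-1)!(m-k-1)!) = 2^{m-2}/(m-2)!$, and Stirling's formula in the form $(m\log 2)^{m-2}/(m-2)! \leq C_0 \, (e\log 2)^m$ absorbs the polynomial-over-factorial. All $m$-dependent contributions thus combine into a geometric factor $(C' \beta I_\rho)^m$ for a constant $C'$ depending only on $\gamma$, and the binomial theorem
\[\sum_{m=1}^n \binom{n-1}{m-1}(C_2 I_\rho)^{n-m}(C' \beta I_\rho)^m \leq C' \beta I_\rho \cdot \bigl(I_\rho(C_2 + C' \beta)\bigr)^{n-1}\]
gives the claimed bound $K(C_3 I_\rho)^n$ once one checks that $C_2 + C' \beta \leq 2C_2 = C_3$, which is easily verified from the explicit value of $C_2$ since $C_2$ is exponential in $d\delta$ while $C'\beta$ is bounded by a moderate multiple of $\beta$. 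The main obstacle is obtaining the linear-in-$m$ exponent in the first place: a crude bound $t_j^{-1} \leq 2^m$ on the intermediate birth times would produce a ruinous factor $2^{m^2}$, and it is only the combination of iteratively integrating the inverse-power integrand along each monotone branch with the regularity cutoff $t_k \geq 2^{-m}$ that keeps the exponent linear in $n$.
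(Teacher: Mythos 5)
This is essentially the paper's own proof: the same Mecke/BK decomposition over the skeleton length $m$ and the position $k$ of the oldest skeleton vertex, Lemma~\ref{LemkConnection} applied per connector segment giving the factor $(C_2I_\rho)^{n-m}\binom{n-1}{m-1}$, branch-wise iterated integration of the birth times producing the $\log^{k-1}$ and $\log^{m-k-1}$ factors, the regularity cutoff $t_k\geq 2^{-m}$, Stirling's formula and the binomial theorem. The only bookkeeping caveat is that the $m$-independent factor $\tfrac{1}{\gamma^2(2\gamma-1)}$ from the $t_k$-integral (which blows up as $\gamma\downarrow\tfrac12$) must be absorbed into the constant $K$ rather than into the geometric base $C'$, so that the verification $C_2+C'\beta\leq 2C_2$ only involves the genuinely exponential base of order $2^{2\gamma}e\log 2$, exactly as in the paper's argument.
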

 \pagebreak[3]

\begin{proof}
Writing $\0=\x_0=(x_0,t_0)$ and following the same arguments of Mecke equation, BK-Inequality and Lemma~\ref{LemkConnection} as done in the previous proof of Lemma~\ref{LemIrregPath}, we get for large enough $n$ that
	\begin{align}
	 \P^p_0(A_\text{reg}(n))\leq &  \sum_{m=1}^n  \sum_{k=0}^m \int_{2^{-m}}^1\d t_0 \  {n-1 \choose m-1}(C_2 I_\rho)^{n-m} \notag \\
	 											 & \times\int\limits_{\substack{(x_1,t_1),\dots(x_m,t_m)\in\R^d\times(0,1] \\ t_0>t_1>\dots >t_k>2^{-m} \\ t_k<t_{k+1}<\dots <t_m}} \bigotimes_{j=1}^m \d(x_j,t_j) \ \prod_{j=1}^m \varphi((x_{j-1},t_{j-1}),(x_j,t_j)). \label{eqRegPath}
	\end{align}
Here, the two sums and integrals describe all regular skeletons a regular path of length~$n$ can have. For the calculation, we focus on $\gamma>1/2$. For $\gamma=1/2$ minor changes are needed; we comment on this below. Recall that 
	\[\varphi((x_{j-1},t_{j-1}),(x_j,t_j))=\rho({\gp(t_{j-1}, t_j)}|x_{j-1}-x_j|^d).\]
Therefore, the right-hand side of \eqref{eqRegPath} reads
	\begin{align}
		& \sum_{m=1}^n {n-1 \choose m-1} (C_2 I_\rho)^{n-m}\notag \\ 
		& \qquad \times\sum_{k=0}^m I_\rho^m \int\limits_{\substack{1>t_0>t_1>\dots >t_k>2^{-m} \\ t_k<t_{k+1}<\dots <t_m}} \bigotimes_{j=0}^m\d t_j \ \prod_{j=1}^m {\gp(t_{j-1},t–j)^{-1}}. \label{eqRegPath2}
	\end{align}
For $k=0$ the integral from \eqref{eqRegPath2} can be written as
	\[\beta^m\int_{2^{-m}}^1\d t_0\int_{t_0}^1\d t_1 \dots\int_{t_{m-1}}^1\d t_m t_0^{-\gamma}t_m^{\gamma-1}\prod_{j=1}^{m-1} t_j^{-1}\leq \left(\frac{\beta}{1-\gamma}\right)^m,\]
by Lemma~\ref{IntIndegree}. For $k=m$, we obtain for the integral from \eqref{eqRegPath2}
	\[\beta^m\int_{2^{-m}}^1\d t_0\int_{2^{-m}}^{t_0} \d t_1\dots \int_{2^{-m}}^{t_{m-1}}d t_m t_0^{\gamma-1}t_m^{-\gamma}\prod_{j=1}^{m-1}t_j^{-1}\leq \left(\frac{\beta}{1-\gamma}\right)^m,\]
by Lemma~\ref{IntOutdegree}.
For $1\leq k\leq m-1$, we infer for the integral from \eqref{eqRegPath2}, using Lemma~\ref{IntSkeleton}, 

	\begin{align*}
		& \beta^m\sum_{k=1}^{m-1} \int_{2^{-m}}^1\d t_0\int_{2^{-m}}^{t_0}\d t_1\dots \int_{2^{-m}}^{t_{k-1}}\d t_k \Bigg[t_0^{\gamma-1}\left(\prod_{j=1}^{k-1}t_j^{-1}\right)t_k^{-\gamma} \\
		& \hspace{5cm}\times\int_{t_k}^1\d t_{k+1}\dots\int_{t_{m-1}}^1\d t_m \left[t_k^{-\gamma}\left(\prod_{j=k+1}^{m-1} t_j^{-1}\right)t_m^{\gamma-1}\right]\Bigg]	\\
		& \qquad \leq  \beta^m\frac{2^{-m(1-2\gamma)}(m\log(2))^{m-2}}{\gamma^2(2\gamma-1)(m-2)!}\sum_{k=1}^{m-1} {{m-2}\choose {k-1}}.
	\end{align*}
Since $m^{m-2}/(m-2)!$ asymptotically equals $2^{\log_2(e)(m-2)}/\sqrt{2\pi(m-2)}$ by Stirling's formula, and 
\smash{$\sum_{k=1}^{m-1}{ {m-2}\choose {k-1}}\leq 2^{m-2}$}, we infer from \eqref{eqRegPath} and \eqref{eqRegPath2}
	\begin{align*}
	 \P_0^p (A_\text{reg}(n)) & \leq I_\rho^n K\sum_{m=1}^n {n-1 \choose m-1} \beta^m C_2^{n-m}\left((1-\gamma)^{-m}+(2^{2\gamma+\log_2(e)}\log(2))^m\right),
	\end{align*}
for some constant $K\geq 2$.
As $C_2>(1-\gamma)^{-1}$ and $C_2\geq 2^{2\gamma+\log_2(e)}\log(2)$ we infer that
	\[\P_0^p(A_\text{reg}(n))\leq K(I_\rho C_3)^n.\]
For $\gamma=\frac12$, Lemma~\ref{IntTwoGamma} and Lemma~\ref{IntSkeleton} have to be modified slightly. The changes in the calculations only influence the value of $K$ and not the constant~$C_3$.
\end{proof} 

{ Setting \(p\) small enough that \(C_3I_{\rho}<1\) concludes the proof of \Cref{ThmPercolationPhase}(a).}
\end{proof}

\section{Absence of a subcritical phase}\label{SecSupercrit}

In this section, we prove Theorem~\ref{ThmPercolationPhase}(b) using a strategy of Jacob and M\"orters~\cite{JacobMoerters2017}. Starting from a sufficiently old vertex, we use a young connector to connect the old vertex with a much older one; we repeat this indefinitely, moving to older and older vertices as we go along. To ensure that this procedure generates an infinite path with positive probability, we have to show that the failure probabilities of connecting the pairs of increasingly old vertices sum to a probability strictly less than one.
\smallskip

 To this end, we show that an old vertex is \emph{with extreme probability} connected to a much older one by a single connector. Here, if $(A(t))_{t>0}$ is a family of events, we say an event $A(t)$ holds with extreme probability, or $wep(t)$, if it holds with probability at least $1-\exp(-\Omega(\log^2(t)))$, as $t\to\infty$, where $\Omega(t)$ is the standard Landau symbol. Observe, if $(A(t)_n)_{n\in\N}$ is a sequence of events, holding simultaneously $wep(t)$ in the sense that 
$$\inf_n \P(A(t)_n)\geq 1-\exp(\Omega(\log^2(t))),$$ as $t \to \infty$, then $\bigcap_{k\leq \lfloor t\rfloor}A(t)_k$ 
holds~$wep(t)$. \smallskip

 Because $g^{\text{pa}}, g^\text{sum}\leq g^\text{min}$ we can fix the kernel $g$ to be the min kernel $g^\text{min}$ throughout this 
 section.\footnote{ We retain the terminology of \emph{old} and \emph{young} vertices motivated by preferential attachment for convenience and better comparison with 
 the previous section.} Hence, for two given vertices $\x=(x,t)$ and $\y=(y,s)$, the connection probability is given by
	\[\varphi(\x,\y)=p \rho(\beta^{-1}(s\wedge t)^{-\gamma}|x-y|^d).\]  	
Recall that $\rho$ is regularly varying with index $-\delta$ for $\delta>1$. Further, $\gamma>\delta/(\delta+1)$. Thus, we can choose
	\[\alpha_1\in\left(1,\tfrac{\gamma}{\delta(1-\gamma)}\right) \text{ and then fix }\alpha_2\in\left(\alpha_1,\tfrac{\gamma}{\delta}(1+\alpha_1\delta)\right).\]
The following lemma shows that the outlined strategy for an infinite path works and thus proves Theorem~\ref{ThmPercolationPhase}(b).
\medskip

\begin{lemma}\label{LemRobustness}
Let \smash{$\gamma>\frac{\delta}{\delta+1}$} and $\rho$ be regularly varying with index $-\delta$ for $\delta>1$. Let $\alpha_1,\alpha_2$ be as defined as above. Let $\x_0=(x_0, s_0)$ be a given Poisson point with $s_0<\frac{1}{2}$. Then, for any retention parameter $p>0$, $wep(1/s_0)$, there exists a sequence $(\x_k)_{k\in\N}$ of vertices $\x_k=(x_k,s_k)\in \X$ such that
\begin{enumerate}[(i)]
	\item $s_k<s_{k-1}^{\alpha_1}$ and $|x_k-x_{k-1}|^d<\frac{\beta}{2} s_{k-1}^{-\alpha_2}$ and
	\item $\x_{k-1}\xleftrightarrow[\x_{k-1},\x_k]{2}\x_k$
\end{enumerate}
for all $k\in\N$.
\end{lemma}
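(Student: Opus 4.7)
The plan is to iterate: given $\x_{k-1} = (x_{k-1}, s_{k-1})$, I will show that, conditional on the history, the probability of failing to build $\x_k$ and its connector $\z$ is at most $\exp(-c\log^2(1/s_{k-1}))$ for some $c > 0$. Since the construction enforces $s_k < s_{k-1}^{\alpha_1}$ with $\alpha_1 > 1$, one has $\log(1/s_{k-1}) \geq \alpha_1^{k-1}\log(1/s_0)$, so a union bound over $k \in \N$ leaves a total failure probability dominated by its first term $\exp(-c\log^2(1/s_0))$, which is $wep(1/s_0)$ as required.

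At the inductive step I use a combined first-moment calculation. Since $\gp,\, g^{\text{sum}} \leq g^{\text{min}}$ it suffices, as noted at the start of the section, to work with $g = g^{\text{min}}$. Let $N_k$ count pairs of Poisson points $(\x_k, \z) = ((x_k, s_k), (z, u))$ such that $s_k < s_{k-1}^{\alpha_1}$, $|x_k - x_{k-1}|^d < (\beta/2) s_{k-1}^{-\alpha_2}$, $u > s_{k-1}$, and both edges $\x_{k-1}\sim\z$ and $\z\sim\x_k$ are present in $\G^p$. Because $u > s_{k-1} > s_k$, the connection probabilities depend only on $s_{k-1}$ and $s_k$ through the min-kernel, and Mecke's formula, together with the substitutions $\xi = x_k - x_{k-1}$ and $y = z - x_{k-1}$, yields a triple integral with integrand $\rho(\beta^{-1}s_{k-1}^\gamma|y|^d)\,\rho(\beta^{-1}s_k^\gamma|y-\xi|^d)$ over the product domain $\{|\xi|^d < (\beta/2)s_{k-1}^{-\alpha_2}\} \times (0, s_{k-1}^{\alpha_1}] \times \R^d$, times a constant factor $p^2(1 - s_{k-1})$.

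To lower-bound this expectation I split the $y$-integral according to whether the effective range-balls around $x_{k-1}$ (of radius of order $s_{k-1}^{-\gamma/d}$) and around $x_k$ (of radius of order $s_k^{-\gamma/d}$) overlap, i.e., whether $|\xi|^d \leq s_k^{-\gamma}$ or not. In the overlap regime both profiles are bounded below by a constant on the smaller ball, of volume of order $s_{k-1}^{-\gamma}$. In the non-overlap regime, regular variation of $\rho$ gives $\rho(\beta^{-1}s_k^\gamma|y-\xi|^d) \gtrsim (s_k^\gamma|\xi|^d)^{-\delta}$ for $y$ in the ball of radius $s_{k-1}^{-\gamma/d}$ around the origin, and the resulting shell integral $\int_{s_k^{-\gamma/d} \leq |\xi| \leq ((\beta/2)s_{k-1}^{-\alpha_2})^{1/d}} |\xi|^{-d\delta}\,d\xi$ converges and is dominated by its inner radius precisely because $\delta > 1$. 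Both contributions match in order in $s_{k-1}, s_k$, and integrating over $s_k \in (0, s_{k-1}^{\alpha_1}]$ yields a combined bound $\E[N_k \mid \x_{k-1}] \gtrsim p^2 s_{k-1}^{\alpha_1(1-\gamma) - \gamma}$. The assumption $\alpha_1 < \gamma/(\delta(1-\gamma))$ forces $\alpha_1(1-\gamma) - \gamma < 0$, so this expectation is polynomially large in $1/s_{k-1}$; the upper bound $\alpha_2 < \gamma(1+\alpha_1\delta)/\delta$ is exactly what keeps the shell integral balanced against the loss from the $|\xi|^{-d\delta}$ tail.

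To upgrade this to a $wep$ estimate I view $N_k$ as a thresholded functional of a Poisson process with independent Bernoulli edge marks and apply a standard Poisson/Chernoff concentration bound, obtaining $\P(N_k = 0 \mid \x_{k-1}) \leq \exp(-c'\,\E[N_k \mid \x_{k-1}])$. Since the expectation is polynomially large in $1/s_{k-1}$, this probability is much smaller than $\exp(-c\log^2(1/s_{k-1}))$ for $s_{k-1}$ small enough, and summing over $k$ gives the lemma. The main obstacle I anticipate is the integral lower bound on $\E[N_k \mid \x_{k-1}]$: one must simultaneously handle the overlap and non-overlap spatial regimes, and the exponent bookkeeping in $\alpha_1, \alpha_2, \gamma, \delta$ must align exactly with the stated constraints --- this is where the critical threshold $\gamma = \delta/(\delta+1)$ enters, since at that boundary the admissible range of $\alpha_1$ collapses to the single point $\alpha_1 = 1$ and the exponent $\alpha_1(1-\gamma) - \gamma$ can no longer be made strictly negative.
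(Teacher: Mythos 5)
Your global architecture coincides with the paper's: an inductive step executed at the scale of $\x_{k-1}$, a per-step failure probability that is stretched-exponentially small in $1/s_{k-1}$, and the observation that $s_k<s_{k-1}^{\alpha_1}$ makes the union bound over $k$ dominated by its first term. Your first-moment integral (min kernel, connectors younger than $\x_{k-1}$, overlap/non-overlap split) is also sound in substance, up to replacing $\delta$ by $\delta+\varepsilon$ via a Potter bound where you invoke regular variation pointwise. The genuine gap is the concentration step. The pair count $N_k$ is not Poisson, and there is no standard Chernoff-type bound of the form $\P(N_k=0\mid\x_{k-1})\leq\exp(-c'\,\E[N_k\mid\x_{k-1}])$ for such counts; in fact this inequality is false in your own setting. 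The number of admissible candidates $\x_k$ is Poisson with mean of order $s_{k-1}^{\alpha_1-\alpha_2}$, so $\P(N_k=0\mid\x_{k-1})\geq\exp\big(-C s_{k-1}^{\alpha_1-\alpha_2}\big)$, while your lower bound $\E[N_k\mid\x_{k-1}]\geq c\,p^2 s_{k-1}^{\alpha_1(1-\gamma)-\gamma}$ is polynomially larger than $s_{k-1}^{\alpha_1-\alpha_2}$, because $\alpha_2<\frac{\gamma}{\delta}(1+\alpha_1\delta)$ and $\delta>1$ give $\alpha_2<\gamma(1+\alpha_1)$. The inflation comes from single candidates carrying very many connectors, so the first moment of the pair count overstates how unlikely failure is; a second-moment (Paley--Zygmund) repair would only yield a polynomially small failure probability, not a $wep$ bound. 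A further symptom is that your final exponent $\alpha_1(1-\gamma)-\gamma$ does not involve the upper constraint on $\alpha_2$ at all, although that constraint is exactly what the construction needs.

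The fix is the paper's two-stage decomposition, which replaces the pair count by two genuinely Poisson counts, for each of which $\P(\mathrm{count}=0)=e^{-\mathrm{mean}}$ holds exactly. First, the candidates $(x_1,s_1)$ with $s_1<s_0^{\alpha_1}$ and $|x_1-x_0|^d<\tfrac{\beta}{2}s_0^{-\alpha_2}$ form a Poisson variable with mean of order $s_0^{\alpha_1-\alpha_2}\to\infty$ (this is where $\alpha_2>\alpha_1$ enters), so one exists $wep(1/s_0)$. Second, conditionally on $\x_0$ and one fixed such $\x_1$, the common neighbours born after time $1/2$ and within distance $((\beta/2)s_0^{-\gamma})^{1/d}$ of $x_0$ form a Poisson process; any such location is within distance $(\beta s_0^{-\alpha_2})^{1/d}$ of $x_1$, so the intensity is at least of order $p^2 s_0^{-\gamma}\rho(s_0^{\alpha_1\gamma-\alpha_2})=\Omega\big(s_0^{\tilde\delta(\alpha_2-\alpha_1\gamma)-\gamma}\big)$ with $\tilde\delta=\delta+\varepsilon$, which is polynomially large precisely because $\alpha_2<\frac{\gamma}{\delta}(1+\alpha_1\delta)$ and $\varepsilon$ is small. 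With these two Poisson tail bounds, your induction and union bound go through verbatim.
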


\pagebreak[3]
\begin{proof}
It suffices to show that, $wep(1/s_0)$, there exists a vertex $\x_1=(x_1,s_1)$ satisfying (i) and (ii). The result then follows by induction. The number of vertices, born before time $s_0^{\alpha_1}$ and within distance $((\beta/2) s_0^{-\alpha_2})^{1/d}$ from $x_0$ is Poisson distributed with parameter 
	\[\text{Vol}\big(\{|x_1-x_0|^d<\tfrac{\beta}{2} s_0^{-\alpha_2}\}\times (0,s_0^{\alpha_1})\big)=O(s_0^{\alpha_1-\alpha_2}),\]
where $O(\cdot)$ again is the standard Landau symbol. Since $\alpha_2>\alpha_1$, there exists such vertex $\x_1$, $wep(1/s_0)$. To connect $\x_0$ to $\x_1$ via a young vertex, we focus on connectors $(y,t)$, born after time $1/2$ and within distance $((\beta/2) s_0^{-\gamma})^{1/d}$ from $\x_0$. Since, for such choices of $(y,t)$, we have
	\[|x_1-y|^d\leq\big((\tfrac{\beta s_0^{-\alpha_2}}{2})^{1/d}+(\tfrac{\beta s_0^{-\gamma}}{2})^{1/d}\big)^d\leq \beta s_0^{-\alpha_2},\] 
the number of such connectors is Poisson distributed with its parameter bounded from below by
	\begin{align}
		& p^2\int_{1/2}^1 \d t\int\limits_{\{|y-x_0|^d\leq \frac{\beta}{2}s_0^{-\gamma}\}} \d y \ \rho(\beta^{-1}s_0^{\gamma}|y-x_0|^{-d})\rho(s_0^{\alpha_1\gamma-\alpha_2}) \notag \\
		&\quad= p^2\tfrac{1}{2} \beta s_0^{-\gamma} \rho(s_0^{\alpha_1\gamma-\alpha_2}) \int\limits_{\{|y-x_0|^d\leq 1/2\}} \d y  \ \rho(|y-x_0|^d). \label{eqIntRobustness}
	\end{align}
Now, we choose $\varepsilon>0$ such that $\tilde{\delta}:=\delta+\varepsilon<\frac{\gamma}{1-\gamma}$, or equivalently $\gamma>\tilde{\delta}/(\tilde{\delta}+1)$, and infer by the Potter bound \cite[Theorem 1.5.6]{Bingham1987},
	\[\rho(s_0^{\alpha_1\gamma-\alpha_2})\geq A s_0^{-\tilde{\delta}(\alpha_1\gamma-\alpha_2)},\] 
for some $A<1$ and $s_0$ small enough. Additionally, $\rho(|x|^d)\geq\rho(1/2)>0$ for all $|x|^d<1/2$. Hence, \eqref{eqIntRobustness} is bounded from below by 
	\[\Omega\big(s_0^{-\tilde{\delta}(\alpha_1\gamma-\alpha_2)-\gamma}\big).\]
Therefore, $wep(1/s_0)$, $\x_1$ satisfies (ii) as
	\[\P^p_{\x_0,\x_1}\{\x_0\xleftrightarrow[\x_0,\x_1]{2}\x_1\}\geq 1-\exp(-\Omega(s_0^{-\tilde{\delta}(\alpha_1\gamma-\alpha_2)-\gamma}))\]
and $-\tilde{\delta}(\alpha_1\gamma-\alpha_2)-\gamma<0$.
\end{proof}

\section{Proof of Theorem 1.2} \label{SecRobustness}

We first introduce for finite $t>0$ the rescaling map
\begin{center}
	\begin{tabular}{llll}
		$h_t:$ & $\mathbb{T}_1^d\times(0,t]$ & $\longrightarrow$ & $\mathbb{T}_t^d\times (0,1]$, \\
			   & $(x,s)$					 & $\longmapsto$	     & $\left(t^{1/d}x, s/t\right)$.
	\end{tabular}
\end{center}
It gives rise to a new graph $h_t(\G_t^p)$ whose vertices live on $\mathbb{T}_t^d\times(0,1]$ and where two rescaled vertices are connected in $h_t(\G_t^p)$ if they were originally connected in~$\G_t^p$. It is easy to see that
$h_t(\G_t^p)$ is the graph  with vertex set given by a standard Poisson process on $\mathbb{T}_t^d\times (0,1]$ and
independent edges with the same connection probability as in~\eqref{PAProb}, see \cite{GracarEtAl2019}. 
The process $t\mapsto h_t(\G_t^p)$  converges almost surely to the graph $\G^p$  in the sense that if a randomly selected point in $h_t(\G_t^p)$ is shifted to the origin, the embedded graph in any ball around the origin converges in distribution as $t\to\infty$,  to the same ball centred in the origin of~$\G_0^p$, see \cite[Theorem 3.1]{GracarEtAl2019}. To obtain the weak local limit structure for the age-based preferential attachment network, 
let $h_t^0(\G_t^p)$ be the graph $h_t(\G_t^p)$  with a root vertex $\0$ added at the origin. If $G$ is a locally finite graph equipped with a root $\x\in G$ and $\xi_t(\x, G)$ is a  non-negative functional acting on rooted graphs that satisfy
\begin{enumerate}[(A)]
	\item $\xi_t(\0, h_t^0(\G_t^p))\rightarrow \xi_\infty(\0, \G_0^p)$ in probability as $t\to\infty$ and
	\item $\sup_{t>0}\E[(\xi_t(\0, h_t^0(\G_t^p)))^q]<\infty$ for some $q>1$,
\end{enumerate}
then we get from Theorem~7 of \cite{JacobMoerters2015}, 
	\begin{equation}
		\lim_{t\to\infty} \frac{1}{t} \sum_{\x\in \G_t^p} \xi_t(\theta_\x(\x),\theta_\x(\G_t^p))=\E[\xi_\infty(\0, \G_0^p)] \label{WLLN}
	\end{equation}
in probability, where $\theta_\x$ acts on points $\y=(y,s)$ as $\theta_\x(\y)=(y-x,s)$ and on graphs accordingly. This weak law of large numbers is an adaptation of a general weak law of large numbers for point processes of Penrose and Yukich \cite{PenroseYukich2003}. \smallskip

For the proof of non-robustness in Theorem~\ref{ThmRobustness} define $\xi^k(\x,G)$ as indicator that the component of the root vertex~$\x$ is of size at most $k$. By the weak law of large numbers 
	\[\lim_{t\to\infty}\frac{1}{t}\sum_{\x\in \G_t^p}\xi^k(\theta_\x(\x),\theta_\x(\G_t^p))=\E[\xi^k(\0,\G^p)].\]
	\ \\[-5mm]
The left hand side is asymptotically the proportion of vertices that are in components no bigger than $k$. As $k\to\infty$, the right hand side converges to $1-\theta(p)$ and  if we choose a $p>0$ such that $\theta(p)=0$, 
there is no giant component in  $(\G_t^p)_{t>0}$.
\medskip
\pagebreak[3]

For the proof of robustness in Theorem~\ref{ThmRobustness} define $\xi_t(\x, G)$ as indicator that the root $\x$ of $G$ belongs to the connected component of the oldest vertex in the finite graph $G$, and $\xi_\infty(\0, G)$ as the indicator that the root $\0$ of $G$ belongs to an infinite component in the infinite graph $G$. Then one has to show that 
$$\xi_t(\0, h_t^0(\G_t^p))\to \xi_\infty(\0, \G_0^p) \mbox{ in probability as $t\to\infty$. }$$
This 
is done in detail in \cite{JacobMoerters2017} for the spatial preferential attachment model and can be easily adapted 
to the simpler age-based preferential attachment model. The weak law of large numbers then yields
\[\lim_{t\to\infty}\frac{1}{t}\sum_{\x\in \G_t^p}\xi_t(\theta_\x(\x),\theta_\x(\G_t^p))=\E[\xi_\infty(\0,\G^p)]=\theta(p)\]
in probability. Again, we see from this that if $\theta(p)>0$ there is a giant component and the result follows from Theorem~\ref{ThmPercolationPhase}, further details are exactly as in \cite{JacobMoerters2017}. 
\medskip
\pagebreak[3]

\appendix
\section{Integration results} \label{appendix}
\begin{lemma}\label{IntIndegree}
Let $\gamma\in(0,1)$ and $t_0\in(0,1)$. Then,
\begin{enumerate}[(a)]
	\item for all $k\in\mathbb{N}$, we have
		\[\int_{t_0}^1\d t_1\int_{t_1}^1 \d t_2\dots \int_{t_{k-1}}^1\d t_k \bigg[ t_0^{-\gamma}\Big(\prod_{j=1}^{k-1}t_j^{-1}\Big)t_k^{\gamma-1}\bigg]\leq \frac{t_0^{-\gamma}\log^{k-1}(1/t_0)}{\gamma(k-1)!}.\]
	\item for all $k\in\mathbb{N}$, we have
		\[\int_0^1 \d t\frac{t^{-\gamma}\log^k(1/t)}{k!}=\left(\frac{1}{1-\gamma}\right)^{k+1}.\]
\end{enumerate}
\end{lemma}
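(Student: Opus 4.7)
The plan is to handle the two parts independently; both reduce to standard manipulations.

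For part (a), I would first pull the constant factor $t_0^{-\gamma}$ out of all the integrals, so the only real work is to bound
\[
\int_{t_0}^1\!dt_1 \int_{t_1}^1\!dt_2 \cdots \int_{t_{k-1}}^1\!dt_k \Bigl(\prod_{j=1}^{k-1} t_j^{-1}\Bigr) t_k^{\gamma-1}.
\]
Next I would evaluate the innermost integral directly:
\[
\int_{t_{k-1}}^1 t_k^{\gamma-1}\,dt_k = \tfrac{1}{\gamma}\bigl(1 - t_{k-1}^\gamma\bigr) \leq \tfrac{1}{\gamma},
\]
which removes $t_k$ and produces the prefactor $1/\gamma$ that appears on the right-hand side. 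What remains is the nested integral $\int_{t_0}^1 \frac{dt_1}{t_1} \int_{t_1}^1 \frac{dt_2}{t_2}\cdots \int_{t_{k-2}}^1 \frac{dt_{k-1}}{t_{k-1}}$ of a product of $t_j^{-1}$ factors, which I would prove by induction on $k$ equals $\log^{k-1}(1/t_0)/(k-1)!$. The base case $k=1$ is $\int_{t_0}^1 \frac{dt_1}{t_1} = \log(1/t_0)$, and for the induction step one uses that the derivative of $\log^{k-1}(1/t_0)/(k-1)!$ with respect to $t_0$ equals $-\log^{k-2}(1/t_0)/[t_0 (k-2)!]$, matching the standard recursive structure of such iterated integrals. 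Multiplying these two bounds and restoring $t_0^{-\gamma}$ yields the claim.

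For part (b), I would apply the substitution $u = \log(1/t)$, equivalently $t = e^{-u}$ and $dt = -e^{-u}\,du$, mapping $t\in(0,1)$ to $u\in(0,\infty)$. The integrand becomes $e^{\gamma u} u^k e^{-u}/k! = u^k e^{-(1-\gamma)u}/k!$, so
\[
\int_0^1 \frac{t^{-\gamma}\log^k(1/t)}{k!}\,dt = \frac{1}{k!}\int_0^\infty u^k e^{-(1-\gamma)u}\,du.
\]
A further rescaling $v=(1-\gamma)u$ reduces this to $(1-\gamma)^{-(k+1)}\Gamma(k+1)/k! = (1-\gamma)^{-(k+1)}$, giving the stated equality.

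Neither part poses a real obstacle; the only mild care needed is in the induction in part (a) to make sure the factor $1/(k-1)!$ (rather than $1/k!$) drops out correctly, which is because the $k$-th integration was already used up to produce the $1/\gamma$ prefactor, leaving only $k-1$ logarithmic integrations.
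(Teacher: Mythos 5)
Your argument is correct, and it differs slightly in structure from the paper's. For part (a) the paper runs a single induction on $k$ over the whole nested integral, applying the induction hypothesis to the inner block with $t_1$ playing the role of $t_0$ (via the split $t_1^{-1}=t_1^{\gamma-1}t_1^{-\gamma}$) and then integrating $t_1^{-1}\log^{k-1}(1/t_1)$; you instead dispose of the single $t_k^{\gamma-1}$ factor first, using the uniform bound $\int_{t_{k-1}}^1 t_k^{\gamma-1}\,\d t_k\le 1/\gamma$ (valid since the remaining integrand is nonnegative), and then evaluate the purely logarithmic iterated integral exactly as $\log^{k-1}(1/t_0)/(k-1)!$ — an identity which is essentially the paper's Lemma~\ref{IntDecreasing} and which your induction (or a symmetry argument over the ordered simplex) establishes without trouble. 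The only blemish is a cosmetic indexing slip in your base case: for $k=1$ there are no logarithmic integrations left and the claim is just $t_0^{-\gamma}\int_{t_0}^1 t_1^{\gamma-1}\,\d t_1\le t_0^{-\gamma}/\gamma$; your displayed base case corresponds to one logarithmic integration, i.e.\ $k=2$ of the original statement. For part (b) the paper inducts with integration by parts, while your substitution $u=\log(1/t)$ reduces the integral to $\Gamma(k+1)/\bigl(k!(1-\gamma)^{k+1}\bigr)=(1-\gamma)^{-(k+1)}$; both are standard and equally short, with your route giving the closed form in one step rather than recursively.
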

\begin{proof}
We prove (a) by induction. For $k=1$, we have
	\[t_0^{-\gamma}\int_{t_0}^1\d t_1 t_1^{\gamma-1}\leq \frac{t_0^{-\gamma}}{\gamma}.\]
For $k+1$ we get using the induction hypothesis
	\begin{align*}
	 & \int_{t_0}^1\d t_1\int_{t_1}^1 \d t_2\dots \int_{t_{k}}^1\d t_{k+1} \bigg[ t_0^{-\gamma}\Big(\prod_{j=1}^{k}t_j^{-1}\Big)t_{k+1}^{\gamma-1}\bigg] 
	  \leq t_0^{-\gamma}\int_{t_0}^1 \d t_1 \frac{t_1^{-1}\log^{k-1}(1/t_1)}{\gamma(k-1)!} \\
	  & \quad= \frac{t_0^{-\gamma}(-1)^{k-1}}{\gamma(k-1)!}\int_{t_0}^1 \d t_1\log(t_1)'\log^{k-1}(t_1) 
	  = \frac{t_0^{-\gamma}\log^k(1/t_0)}{\gamma k!}.
	\end{align*}
We prove (b) by induction as well. As $\gamma<1$, we get, for $k=1$ using integration by parts
	\[\int_0^1 \d t \frac{t^{-\gamma}\log(1/t)}{1!}=\int_0^1\d t \frac{t^{-\gamma}}{1-\gamma}=\frac{1}{(1-\gamma)^2}.\]
Analogously for $k+1$,
 \[\int_0^1\d t \frac{t^{-\gamma}\log^{k+1}(1/t)}{(k+1)!}=\int_0^1 \d t\frac{t^{-\gamma}\log^k(1/t)}{(1-\gamma)k!}=\frac{1}{(1-\gamma)^{k+2}}\]
by the induction hypothesis.
\end{proof}

\begin{lemma}\label{IntTwoGamma} 
Let $\gamma\in(1/2,1)$ and $x\in(0,1)$. Then, for all $k\in\mathbb{N}$, it holds
	\[\int_x^1 \d t \ \frac{t^{-2\gamma}\log^k(1/t)}{k!}\leq \frac{x^{1-2\gamma}\log^k(1/x)}{(2\gamma-1)k!}.\]
\end{lemma}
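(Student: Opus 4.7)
My plan is to prove the inequality by integration by parts, exploiting the fact that the differentiation of $\log^k(1/t)/k!$ lowers the exponent of the logarithm by one, while the antiderivative of $t^{-2\gamma}$ is an explicit power of $t$ with exponent $1-2\gamma<0$. Concretely, for $k\geq 1$ I would set $u=\log^k(1/t)/k!$ and $\d v = t^{-2\gamma}\d t$, so that $\d u = -\log^{k-1}(1/t)/((k-1)!\,t)\,\d t$ and $v = t^{1-2\gamma}/(1-2\gamma)$. Integrating by parts over $[x,1]$, the boundary term at $t=1$ vanishes because $\log(1)=0$, and the boundary term at $t=x$ contributes exactly
\[
-\frac{x^{1-2\gamma}\log^k(1/x)}{(1-2\gamma)\,k!} \;=\; \frac{x^{1-2\gamma}\log^k(1/x)}{(2\gamma-1)\,k!},
\]
which is the right-hand side of the claimed inequality. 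The remaining integral equals
\[
\int_x^1 \frac{t^{-2\gamma}\log^{k-1}(1/t)}{(2\gamma-1)(k-1)!}\,\d t,
\]
which is non-negative and carries a minus sign, so dropping it yields precisely the stated upper bound.

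For the base case $k=0$ the integration-by-parts is trivial: we have $\int_x^1 t^{-2\gamma}\,\d t = (x^{1-2\gamma}-1)/(2\gamma-1) \leq x^{1-2\gamma}/(2\gamma-1)$ since $2\gamma-1>0$ and $x^{1-2\gamma}\geq 1$. Thus the argument uniformly covers all $k\in\mathbb{N}$, regardless of whether one takes $\mathbb{N}$ to include $0$.

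There is no real obstacle here; the assumption $\gamma>1/2$ is used only to ensure that the exponent $1-2\gamma$ is strictly negative, so that the boundary term at $t=x$ dominates and the sign is favourable. (Equivalently, one could simply estimate $\log^k(1/t)\leq \log^k(1/x)$ on $[x,1]$ and integrate $t^{-2\gamma}$ directly, but the integration-by-parts approach is closer in spirit to Lemma~\ref{IntIndegree} and makes the mechanism transparent.)
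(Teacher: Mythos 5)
Your proof is correct and follows essentially the same route as the paper: a single integration by parts producing the boundary term $\frac{x^{1-2\gamma}\log^k(1/x)}{(2\gamma-1)k!}$ and a non-negative remainder integral that is then dropped. The extra remark about the $k=0$ case and the alternative crude bound $\log^k(1/t)\leq\log^k(1/x)$ are fine but not needed.
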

\begin{proof}
Integration by parts yields
	\begin{align*}
	\int_x^1 \d t \ \frac{t^{-2\gamma}\log^k(1/t)}{k!} =\frac{x^{1-2\gamma}\log^k(1/x)}{(2\gamma-1)k!} -\int_x^1\d t\frac{t^{-2\gamma}\log^{k-1}(1/t)}{(2\gamma-1)(k-1)!}\leq \frac{x^{1-2\gamma}\log^k(1/x)}{(2\gamma-1)k!},
	\end{align*}
as the second integral is bounded from below by $0$.
\end{proof}

\begin{lemma}\label{IntDecreasing}
Let $\gamma\in(0,1)$, $x\in(0,1)$ and $t_0\in(x,1)$. Then, for all $k\in\mathbb{N}$, it holds
	\[\int_x^{t_0}\d t_1\int_x^{t_1}\d t_2\dots\int_x^{t_{k-1}}\d t_k \Big( t_0^{\gamma-1}\prod_{j=1}^k t_j^{-1}\Big)=\frac{t_0^{\gamma-1}\log^k(t_0/x)}{k!}.\]
\end{lemma}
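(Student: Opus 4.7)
The plan is straightforward: the factor $t_0^{\gamma-1}$ depends on none of the integration variables $t_1,\dots,t_k$, so I would pull it out of all the integrals at the start. What remains is the purely multiplicative identity
\[
\int_x^{t_0}\!\d t_1\int_x^{t_1}\!\d t_2\cdots\int_x^{t_{k-1}}\!\d t_k \prod_{j=1}^k t_j^{-1}=\frac{\log^k(t_0/x)}{k!},
\]
which I would establish by induction on $k$.

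For the base case $k=1$ the integral is $\int_x^{t_0} t_1^{-1}\,\d t_1=\log(t_0/x)$, matching the claim. For the inductive step, assume the identity holds at level $k-1$ with any upper bound in place of $t_0$; applying this to the inner $k-1$ integrals (with upper bound $t_1$) gives
\[
\int_x^{t_1}\!\d t_2\cdots\int_x^{t_{k-1}}\!\d t_k \prod_{j=2}^k t_j^{-1}=\frac{\log^{k-1}(t_1/x)}{(k-1)!}.
\]
Substituting this back and using the change of variable $u=\log(t_1/x)$, $\d u=t_1^{-1}\d t_1$, so that the limits transform into $0$ and $\log(t_0/x)$, yields
\[
\int_x^{t_0}\frac{1}{t_1}\cdot\frac{\log^{k-1}(t_1/x)}{(k-1)!}\,\d t_1=\int_0^{\log(t_0/x)}\frac{u^{k-1}}{(k-1)!}\,\d u=\frac{\log^k(t_0/x)}{k!}.
\]
Reinstating the prefactor $t_0^{\gamma-1}$ gives the claimed identity.

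There is no genuine obstacle here — the lemma is a pure Fubini/induction calculation of the measure of the simplex $\{x<t_k<\cdots<t_1<t_0\}$ against the logarithmic density $\prod_j t_j^{-1}$; the $\gamma$-dependence is inert because it only enters through the constant $t_0^{\gamma-1}$. The only care needed is to state the inductive hypothesis with a variable upper bound (not just the fixed $t_0$) so that it can be applied to the inner block of integrals.
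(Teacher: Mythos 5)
Your proposal is correct and follows essentially the same route as the paper: induction on $k$ with the substitution $u=\log(t_1/x)$ reducing the outer integral to $\int_0^{\log(t_0/x)} u^{k-1}/(k-1)!\,\d u$. Pulling the inert factor $t_0^{\gamma-1}$ out at the start and stating the inductive hypothesis with a variable upper limit is exactly the (implicit) bookkeeping in the paper's proof, so there is no substantive difference.
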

\begin{proof}
For $k=1$, we get
	\[\int_x^{t_0}\d t_1 \ t_0^{\gamma-1}t_1^{-1}=t_0^{\gamma-1}\log(t_0/x).\]
For $k+1$, using induction hypothesis, we get
	\begin{align*}
	\int_x^{t_0}\d t_1\int_x^{t_1}\d t_2\dots\int_x^{t_{k}}\d t_{k+1}& \Big( t_0^{\gamma-1}\prod_{j=1}^{k+1} t_j^{-1}\Big)
	 = t_0^{\gamma-1}\int_x^{t_0} \d t_1 \frac{t_1^{-1}\log^k(t_1/x)}{k!} \\
	& =t_0^{\gamma-1}\int_0^{\log(t_0/x)}\d y \frac{y^k}{k!}  = \frac{t_0^{\gamma-1}\log^{k+1}(t_0/x)}{(k+1)!}.
	\end{align*}
\end{proof}

\begin{lemma}\label{IntSkeleton}
Let $\gamma\in(1/2,1)$ and $m,k\in\mathbb{N}$, such that $m\geq 2$ and $1\leq k\leq m-1$. Further, let $x\in(0,1)$. Then,
	\begin{align}
		&\int\limits_x^1\d t_0\int\limits_x^{t_0}\d t_1\dots \int\limits_x^{t_{k-1}}\d t_k \left[t_0^{\gamma-1}\Big(\prod_{j=1}^{k-1}t_j^{-1}\Big)t_k^{-\gamma}\int\limits_{t_k}^1\d t_{k+1}\dots\int\limits_{t_{m-1}}^1\d t_m \bigg[t_k^{-\gamma}\Big(\prod_{j=k+1}^{m-1} t_j^{-1}\Big)t_m^{\gamma-1}\bigg]\right] \notag \\
		& \ \leq {{m-2}\choose {k-1}}\frac{x^{1-2\gamma}\log^{m-2}(1/x)}{\gamma^2(2\gamma-1)(m-2)!}. \label{intAppendix}
	\end{align}
\end{lemma}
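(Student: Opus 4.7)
The plan is to peel the nested integral apart by successive applications of the preceding appendix lemmas -- one for each ``half'' of the skeleton -- after which the binomial coefficient emerges from the elementary identity \smash{$\tfrac{1}{(k-1)!(m-k-1)!}=\binom{m-2}{k-1}/(m-2)!$}. The two halves of the skeleton integral -- the decreasing part over $t_1,\dots,t_k$ and the increasing part over $t_{k+1},\dots,t_m$ -- each have exactly the shape handled by one of the preceding lemmas, so the proof is essentially bookkeeping.

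First I would integrate out the increasing part over $t_{k+1},\dots,t_m$ using Lemma~\ref{IntIndegree}(a) with the roles of ``$t_0$'' and ``$k$'' there played by $t_k$ and $m-k$. This produces the factor
\[\frac{t_k^{-\gamma}\log^{m-k-1}(1/t_k)}{\gamma\,(m-k-1)!},\]
which, combined with the $t_k^{-\gamma}$ already present in the original integrand, leaves $t_k^{-2\gamma}\log^{m-k-1}(1/t_k)$ as the new innermost factor. Next, enlarging the domain from $(x,t_{k-1})$ to $(x,1)$ and invoking Lemma~\ref{IntTwoGamma} with $k\to m-k-1$, the integration over $t_k$ is bounded by
\[\frac{x^{1-2\gamma}\log^{m-k-1}(1/x)}{2\gamma-1}.\]

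After these two steps the remaining integral over $t_1,\dots,t_{k-1}$ is exactly of the form handled by Lemma~\ref{IntDecreasing} (with $k\to k-1$ variables of integration), evaluating to $t_0^{\gamma-1}\log^{k-1}(t_0/x)/(k-1)!$. To finish I would crudely bound the outermost integral using $\log(t_0/x)\leq \log(1/x)$ together with $\int_x^1 t_0^{\gamma-1}\,\d t_0\leq 1/\gamma$, picking up an additional factor $\log^{k-1}(1/x)/(\gamma\,(k-1)!)$. Multiplying the contributions yields
\[\frac{1}{\gamma(m-k-1)!}\cdot\frac{x^{1-2\gamma}\log^{m-k-1}(1/x)}{2\gamma-1}\cdot\frac{\log^{k-1}(1/x)}{\gamma(k-1)!}=\binom{m-2}{k-1}\frac{x^{1-2\gamma}\log^{m-2}(1/x)}{\gamma^2(2\gamma-1)(m-2)!},\]
which is the claim. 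The only step requiring genuine thought is the use of Lemma~\ref{IntTwoGamma}: this is where the assumption $\gamma>1/2$ enters in an essential way to extract the $x^{1-2\gamma}$ prefactor, and it is precisely why the proof of Lemma~\ref{LemRegPath} notes that the boundary case $\gamma=1/2$ requires a separate modification. The edge cases $k=1$ and $k=m-1$ (where one of the products $\prod_j t_j^{-1}$ is empty) are captured by the same formulas under the convention that empty products equal one.
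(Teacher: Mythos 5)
Your proposal is correct and follows essentially the same route as the paper's proof: apply Lemma~\ref{IntIndegree}(a) to the increasing block, Lemma~\ref{IntTwoGamma} (after enlarging the $t_k$-range to $(x,1)$) to extract the $x^{1-2\gamma}$ factor, Lemma~\ref{IntDecreasing} to the decreasing block, and then the bounds $\log(t_0/x)\leq\log(1/x)$, $\int_x^1 t_0^{\gamma-1}\,\d t_0\leq 1/\gamma$ together with the factorial identity giving $\binom{m-2}{k-1}$. Your handling of the edge cases and of the role of $\gamma>1/2$ matches the paper as well.
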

\begin{proof}
We apply the previous lemmas. By Lemma~\ref{IntIndegree}, we get
	\begin{align*}
	\int_{t_k}^1\d t_{k+1}\dots\int_{t_{m-1}}^1\d t_m \bigg[t_k^{-\gamma}\Big(\prod_{j=k+1}^{m-1} t_j^{-1}\Big)t_m^{\gamma-1}\bigg] \leq \frac{t_k^{-\gamma}\log^{m-k-1}(1/t_k)}{\gamma(m-k-1)!}.
	\end{align*}
Therefore, the integral in \eqref{intAppendix} can be bound by
	\begin{align*}
		\int_x^1 \d t_0\int_x^{t_0}\d t_1\dots \int_x^{t_{k-2}}\d t_{k-1} \bigg[t_0^{\gamma-1}\Big(\prod_{j=1}^{k-1}t_j^{-1}\Big)\int_{x}^{t_{k-1}} \d t_k \frac{t_k^{-2\gamma}\log^{m-k-1}(1/{t_k})}{\gamma(m-k-1)!}\bigg].
	\end{align*}
By Lemma~\ref{IntTwoGamma}
	\[ \int_x^{t_{k-1}}\d t_k \frac{t_k^{-2\gamma}\log^{m-k-1}(1/{t_k})}{\gamma(m-k-1)!} \leq \frac{x^{1-2\gamma}\log^{m-k-1}(1/x)}{\gamma(2\gamma-1)(m-k-1)!}\]
and by Lemma~\ref{IntDecreasing}
	\[\int_x^{t_0}\d t_1\dots \int_x^{t_{k-2}}\d t_{k-1} \ t_0^{\gamma-1}\Big(\prod_{j=1}^{k-1}t_j^{-1}\Big)= \frac{t_0^{\gamma-1}\log^{k-1}(t_0/x)}{(k-1)!}.\]
Therefore, the integral in \eqref{intAppendix} can be further bound by
	\begin{align*} & \int_x^1 \d t_0\frac{t_0^{\gamma-1}\log^{k-1}(t_0/x)}{(k-1)!}\frac{x^{1-2\gamma}\log^{m-k-1}(1/x)}{\gamma(2\gamma-1)(m-k-1)!} \\
	& \qquad \leq {{m-2}\choose {k-1}}\frac{x^{1-2\gamma}\log^{m-2}(1/x)}{\gamma(2\gamma-1)(m-2)!}\int_x^1 \d t_0 \ t_0^{\gamma-1}. 
	\end{align*}
The result follows by integrating with respect to $t_0$. 
\end{proof}

\begin{lemma}\label{IntOutdegree}
Let $\gamma\in(0,1)$ and $k\in\N$. Then
	\[\int_0^1\d t_0\int_0^{t_0}\d t_1\dots\int_0^{t_{k-1}}\d t_k \ t_0^{\gamma-1}\bigg(\prod_{j=1}^{k-1}t_j^{-1}\bigg)t_k^{-\gamma}\leq \bigg(\frac{1}{1-\gamma}\bigg)^k.\] 
\end{lemma}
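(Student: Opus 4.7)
The plan is to prove this by induction on $k$, integrating out the innermost variable $t_k$ at each step. The key observation is that after integrating out $t_k$, the integral reduces exactly to the same form with $k$ replaced by $k-1$, multiplied by a factor of $\frac{1}{1-\gamma}$. In fact, the inequality will turn out to be an equality; stating it as an inequality is merely for convenience.

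For the base case $k=1$, the integral reads
\[\int_0^1 \d t_0 \int_0^{t_0} \d t_1 \ t_0^{\gamma-1} t_1^{-\gamma} = \int_0^1 \d t_0 \ t_0^{\gamma-1}\cdot\frac{t_0^{1-\gamma}}{1-\gamma} = \frac{1}{1-\gamma},\]
noting that the empty product $\prod_{j=1}^{0}t_j^{-1}$ equals $1$.

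For the induction step, I would first evaluate
\[\int_0^{t_{k-1}} \d t_k \ t_k^{-\gamma} = \frac{t_{k-1}^{1-\gamma}}{1-\gamma},\]
which is valid because $\gamma\in(0,1)$ ensures integrability at $0$. Substituting this back, the factor $t_{k-1}^{-1}\cdot t_{k-1}^{1-\gamma} = t_{k-1}^{-\gamma}$ appears, and the expression reduces to
\[\frac{1}{1-\gamma}\int_0^1 \d t_0\int_0^{t_0}\d t_1\cdots \int_0^{t_{k-2}}\d t_{k-1}\ t_0^{\gamma-1}\Big(\prod_{j=1}^{k-2}t_j^{-1}\Big) t_{k-1}^{-\gamma},\]
which is precisely $\frac{1}{1-\gamma}$ times the $(k-1)$-case integral. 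By the induction hypothesis the latter is at most $\bigl(\frac{1}{1-\gamma}\bigr)^{k-1}$, giving the desired bound $\bigl(\frac{1}{1-\gamma}\bigr)^k$.

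There is no real obstacle here since the integrand factorises nicely under the nested domain $t_0>t_1>\cdots>t_k>0$: integrating out the smallest variable leaves a structurally identical integrand. The only conditions that need attention are $\gamma<1$ (for convergence of $\int t_k^{-\gamma}$ near zero, which is implicit in the statement's requirement $\gamma\in(0,1)$) and the correct bookkeeping of the exponents of $t_{k-1}$ after substitution.
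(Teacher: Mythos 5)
Your induction is correct and is essentially the paper's own argument: the paper also integrates out the innermost variable $t_k$, observes that $t_{k-1}^{-1}t_{k-1}^{1-\gamma}=t_{k-1}^{-\gamma}$ reproduces the same integrand with $k$ reduced by one, and repeats across all integrals. Your additional observation that the bound is in fact an equality is also accurate.
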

\begin{proof}
We have
	\begin{align*}
	\int_0^1\d t_0 & \int_0^{t_0}\d t_1\dots\int_0^{t_{k-2}}\d t_{k-1}\bigg[t_0^{\gamma-1}\Big(\prod_{j=1}^{k-1}t_j^{-1}\Big)\int_0^{t_{k-1}}\d t_{k} \ t_k^{-\gamma}\bigg] \\
	& = \frac{1}{1-\gamma}\int_0^1\d t_0\int_0^{t_0}\d t_1\dots\int_0^{t_{k-2}}\d t_{k-1} \ t_0^{\gamma-1}\Big(\prod_{j=1}^{k-2}t_j^{-1}\Big)t_{k-1}^{-\gamma}
	\end{align*}
and the result follows by repeating this across all integrals.
\end{proof}\pagebreak[3]

\end{spacing}

\smallskip

{\bf Acknowledgements:} This research was supported by Deutsche Forschungsgemeinschaft (DFG) Project 425842117 and forms part of 
the second author's PhD thesis. We would like to thank a referee and an associate editor for their helpful comments.%
\smallskip%

\footnotesize{\printbibliography}
\typeout{get arXiv to do 4 passes: Label(s) may have changed. Rerun}
\end{document}